\documentclass[openright, 11pt]{book}
\usepackage{amsmath,amsfonts,amsthm,amssymb,amscd,hyperref,url}
\usepackage[spanish,english]{babel}
\usepackage[T1]{fontenc}
\usepackage[utf8x]{inputenc}  
\def\classification#1{\def\@class{#1}}
\classification{\null}

\textwidth 14cm
\setlength{\leftmargin}{-.5in}
\DeclareFontFamily{OT1}{rsfs}{}
\DeclareFontShape{OT1}{rsfs}{n}{it}{<-> rsfs10}{}
\DeclareMathAlphabet{\mathscr}{OT1}{rsfs}{n}{it}

\DeclareMathOperator{\mo}{\,mod}

\DeclareMathOperator{\codim}{codim}
\DeclareMathOperator{\GL}{GL}

\DeclareMathOperator{\diam}{diam}

\DeclareMathOperator{\SL}{SL}

\DeclareMathOperator{\PSL}{PSL}

\DeclareMathOperator{\Stab}{Estab}

\DeclareMathOperator{\tr}{tr}

\DeclareMathOperator{\Cl}{Cl}



\newcommand{\x}{\boldsymbol x}

\newtheorem{thm}{Teorema}[section]
\newtheorem{lem}{Lema}[section]
\newtheorem{coro}{Corolario}[section]
\newtheorem{prop}{Proposici\'on}[section]

\newtheorem{prob}{Ejercicio}[section]

\setlength{\parindent}{0cm}

\begin{document}
\selectlanguage{spanish}
\thispagestyle{empty} 
\newpage
\thispagestyle{empty}
\begin{center}
AGRA II: Aritmética, grupos y análisis

\vskip .1cm

An ICTP-CIMPA Research School

\vskip .5cm
\hrule
\vskip 3 cm

{\Large\bf\sc Crecimiento y expansi\'on en $\SL_2$} 

\vskip 2cm

{\huge Harald Andrés Helfgott}

\vskip .3cm

{\large Universit\"at G\"ottingen/CNRS/Université de Paris VI/VII
\\[.2cm]
helfgott@math.univ-paris-diderot.fr}

\vfill
\hrule
\vskip .3cm
UNIVERSIDAD S. ANTONIO ABAD, CUSCO, PERÚ, del 8 al 22 de Agosto de 2015 
\end{center}

\newpage
\thispagestyle{empty}
\noindent

 \vfill


\newpage
\pagestyle{myheadings}\markboth{}{}
\thispagestyle{empty}
\frontmatter

\thispagestyle{myheadings}\markboth{}{}
\parskip 0.14cm
\parindent 0.5cm

\chapter{Prefacio}
\'Esta es una breve introducci\'on al estudio del crecimiento en los
grupos finitos, con $\SL_2$ como ejemplo. Su \'enfasis cae sobre los
desarrollos de la \'ultima d\'ecada, provenientes en parte de la combinatoria.

El texto -- basado en parte en \cite{MR3348442} -- consiste, en esencia, en notas de clase para un curso en la escuela de invierno
AGRA II en la Universidad San Antonio Abad, Cusco, Per\'u (10--21 agosto 2015),
incluyendo algunos ejercicios. El curso fue la primera mitad de una unidad;
la segunda mitad, sobre expansores en conexi\'on al espacio hiperb\'olico,
corri\'o a cargo de M. Belolipetsky.

El t\'opico tiene una intersecci\'on apreciable con varios otros textos,
incluyendo el libro \cite{MR3309986} de T. Tao y las notas \cite{MR3144176} de E. Kowalski. El tratamiento en el Cap\'itulo \ref{chap:inter}
difiere un tanto de \cite{MR3348442}, en la medida que sigue un tratamiento
m\'as global (grupos algebraicos) y menos local (\'algebras de Lie); en \'esto
puede detectarse una influencia de \cite{MR3309986} y \cite{MR3144176}
(y, en \'ultima instancia, \cite{LP}). No parecen haber desventajas o ventajas
decisivas en \'esto; simplemente he tomado la oportunidad de explorar un
formalismo distinto.

Sin duda, \cite{MR3348442}
da m\'as detalles que el texto presente, tanto de tipo hist\'orico como
de tipo puramente matem\'atico; su tema tambi\'en
es m\'as amplio. La meta principal aqu\'i es dar
una introducci\'on concisa, accesible
y en cierto sentido participativa al t\'opico.

Las brev\'isimas introducciones al grupo $\SL_2$ y a la
geometr\'ia algebraica en general tienen como intenci\'on hacer que el texto
comprensible sea comprensible para estudiantes de distintas \'areas,
aparte de ser partes necesarias de la cultura general. Se pide la paciencia
del los lectores para los cuales tales introducciones son innecesarias.


\mainmatter
\pagestyle{myheadings}
\markboth{\sf{Harald Andr\'es Helfgott}}{\sf{Crecimiento y expansi\'on en $\SL_2$}}

\chapter{Introducci\'on}

Nuestra tema es el crecimiento en los grupos; nuestro ejemplo
ser\'an los grupos $\SL_2(K)$, $K$ un cuerpo finito.

Qu\'e se quiere decir aqu\'i por {\em crecimiento}? Hay diferentes puntos
de vista, dependiendo del \'area.
La manera m\'as concreta de
expresar la cuesti\'on es quiz\'as la siguiente: tenemos un subconjunto finito
$A$ de un grupo $G$. Consideremos los conjuntos
\[\begin{aligned}
A&,\\
A\cdot A &= \{x\cdot y : x,y\in A\},\\
A\cdot A\cdot A &= \{x\cdot y\cdot z: x,y,z\in A\},\\
&\dotsc\\
A^k &= \{x_1 x_2 \dotsc x_k : x_i\in A\}.
\end{aligned}\]
Escribamos $|S|$ por el n\'umero de elementos de un conjunto finito $S$. La
pregunta es: qu\'e tan r\'apido crece $|A^k|$ a medida que $k$ se
incrementa?

Tal cuesti\'on ha sido estudiada desde la perspectiva de la combinatoria
aditiva (caso de $G$ abeliano) y de la teor\'ia de grupos geom\'etrica 
($G$ infinito, $k\to \infty$). También hay varios conceptos relacionados,
de suma importancia, provenientes de la teor\'ia de grafos y de analog\'ias
con la geometr\'ia: {\em di\'ametros}, {\em expansores}, etc.

Ahora bien, porqué elegir a los grupos $\SL_2(K)$ como primer caso a 
estudiar, m\'as all\'a de la necesidad, en una exposici\'on, de comenzar por
un caso concreto?

\section{Los grupos $\SL_2(R)$}

Sea $R$ un anillo; por ejemplo, podemos tomar $R=\mathbb{Z}$, o
$R=\mathbb{Z}/p\mathbb{Z}$. Definimos
\[\SL_2(R) = \left\{\left(\begin{matrix}a & b\\c & d\end{matrix}\right):
a,b,c,d\in R,\;\; a d - b c = 1.
\right\}.\]
La letra ``S'' en $\SL$ viene de e{\bf s}pecial (lo que aqu\'i quiere decir:
 de determinante igual a $1$), mientras que ``L'' viene de {\bf l}ineal
(por tratarse de un grupo de matrices). El n\'umero $2$ viene del hecho 
 que \'estas son matrices $2$x$2$.

El grupo $\SL_2(R)$ puede verse por lo menos de dos formas: como un grupo 
abstracto, y
como un grupo de transformaciones geom\'etricas. Visto de una manera o
la otra, se trata de un buen caso a estudiar, pues es, por as\'i decirlo,
el objeto m\'as sencillo en demostrar toda una gama de comportamientos 
complejos. Veamos c\'omo.

\subsection{La estructura del grupo $\SL_2(R)$}

Dado un grupo $G$, nos interesamos en sus subgrupos $H<G$, y, en particular,
en sus subgrupos {\em normales} $H\triangleleft G$.
Dado $H\triangleleft G$, podemos decir que $G$ se descompone en $H$
y $G/H$. Un grupo $G$ sin subgrupos normales (aparte de $\{e\}$ y $G$)
se llama {\em simple}. 

Los grupos simples juegan un rol similar al de los primos en los enteros.
Es f\'acil ver que, para todo grupo finito $G$, existen
\begin{equation}\label{eq:turu}\{e\} = H_0 \triangleleft H_1 \triangleleft H_2 \triangleleft \dotsb 
\triangleleft H_k = G\end{equation}
tales que $H_i/H_{i-1}$ es simple y no-trivial para $1\leq i\leq k$.
El teorema de Jordan-H\"older nos dice que tal decomposici\'on es en
esencia \'unica: los factores $H_{i+1}/H_i$ en (\ref{eq:turu}) est\'an
determinados por $G$, y a lo m\'as su orden puede cambiar.

Un grupo resoluble es un grupo que tiene una decomposici\'on tal que
$H_{i+1}/H_i$ es abeliano para todo $i$. 
Como hemos dicho, la combinatoria aditiva ha estudiado tradicionalmente el
crecimiento en los grupos abelianos.
El estudio del crecimiento en los grupos resolubles esta lejos de ser
trivial, o de reducirse por completo al crecimiento en los grupos
abelianos. Empero, una decomposici\'on $H\triangleleft G$ reduce los
problemas de crecimiento (como muchos otros) al estudio de (a) los grupos
$H$ y $G/H$, (b) la {\em acci\'on} de $G/H$ sobre $H$. Por ello, en \'ultimas cuentas, tiene sentido concentrarse en el estudio de los grupos simples, y,
en particular, en el estudio del crecimiento en los grupos simples no abelianos.

Sea $K$ un cuerpo finito. El grupo $\SL_2(K)$ no es ni abeliano ni resoluble.
El {\em centro}
\[Z(G) = \{g\in G: \forall h\in G\;\;\;\;h g = g h\}\]
de un grupo $G$ es siempre un subgrupo normal de $G$. Ahora bien,
para $G = \SL_2(K)$, $Z(G)$ es igual a $\{I,-I\}$; as\'i, a menos que $K$ sea el
cuerpo $\mathbb{F}_2$ con dos elementos, $Z(G)$ no es el grupo trivial
$\{e\}\ne \{I\}$, y por lo tanto
$G=\SL_2(K)$ no es simple. Empero, el cociente
\[\PSL_2(K) := \SL_2(K)/Z(\SL_2(K)) = \SL_2(K)/\{I,-I\}\]
s\'i es simple, para $|K|$ finito y mayor que $3$.\footnote{Este es un
hecho no trivial. Para $|K|$ un n\'umero primo $>3$, fue mencionado, pero no
probado, por Galois (1831). Fue probado para tales $K$ por Jordan
\cite{zbMATH02675601}, y para $K$ finito general por Moore
\cite{zbMATH02721437}. La nota \cite{Conrad} contiene tanto estos apuntes
hist\'oricos como una prueba completa para $\SL_n(K)$, donde $n>2$ o
$n=2$ y $|K|>3$.}

{\small
{\em Comentario de \'indole cultural.}
As\'i, al considerar $\SL_2(K)$ para $K$ variando sobre todos los cuerpos
finitos, obtenemos toda un conjunto infinito de grupos finitos simples
$\PSL_2(K)$. Se trata, por as\'i decirlo, de la familia m\'as sencilla
de grupos finitos simples, junto con aquella dada por los grupos
{\em alternantes} $A_n$. (El grupo $A_n$ es el \'unico subgrupo de \'indice $2$ 
del {\em grupo sim\'etrico} $S_n$, el cual, a su vez,  consiste en las $n!$ permutaciones de $n$ elementos,
con la composici\'on como operaci\'on del grupo.) En verdad, el famoso Teorema
de la Clasificaci\'on de grupos simples nos dice que hay dos tipos
de familias infinitas de grupos simples: las familias de grupos de matrices,
como $\PSL_2(K)$, y la familia $A_n$, aparte de un n\'umero finito de
grupos especiales (como el as\'i llamado ``monstruo'').}

Veamos la estructura de $G=\SL_2(K)$ en m\'as detalle. Si bien $\SL_2(K)$
no tiene subgrupos normales m\'as all\'a de $\{I\}$, $\{I,-I\}$ y
$\SL_2(K)$, tiene subgrupos de varios otros tipos. Los m\'as interesantes para
nosotros son los {\em toros}; en $\SL_2(K)$, aparte del grupo trivial, todos
son {\em toros m\'aximos}. Recordamos que el centralizador de un
elemento $g\in G$
es el grupo
\begin{equation}\label{eq:pastrul}
  C(g) = \{h\in G: hg = g h\}.\end{equation}
Un {\em toro m\'aximo} (denotado
por $T(K)$) es un grupo $C(g)$ donde $g$ es
{\em regular semisimple}; un elemento $g\in G$ es
{\em regular semisimple} si tiene dos valores propios distintos.
Esto es lo mismo que decir que $T(K) = \sigma D \sigma^{-1} \cap
\SL_2(K)$, donde $D$ es el grupo de matrices diagonales
\[D =  \left\{\left(\begin{matrix}r & 0\\0 & r^{-1}\end{matrix}\right):
r\in \overline{K} \right\}
\]
y $\sigma \in \SL_2(\overline{K})$. Aqu\'i $\overline{K}$ es la compleci\'on
(clausura) algebraica de $K$. N\'otese que $\sigma$ puede o puede no
estar en $\SL_2(K)$.

{\em Ejemplo.} Sea $K = \mathbb{R}$. Si $\sigma\in \SL_2(\mathbb{R})$,
entonces $T(K) = \sigma D \sigma^{-1} \cap
\SL_2(K)$ es de la forma
\[
\sigma \left\{\left(\begin{matrix}r & 0\\0 & r^{-1}\end{matrix}\right): 
r\in \mathbb{R}^*\right\}  \sigma^{-1},\]  
y es m\'as bien de la forma
\begin{equation}\label{eq:toro}
  \tau 
\left\{\left(\begin{matrix}\cos \theta & -\sin \theta\\ \sin \theta &
  \cos \theta\end{matrix}\right) : \theta \in \mathbb{R}/\mathbb{Z}\right\}\tau^{-1}\end{equation} (para alg\'un $\tau
  \in \SL_2(\mathbb{R})$) si $\sigma\not\in \SL_2(\mathbb{R})$.
  Est\'a claro que, en el segundo caso, $T(K)$ es un c\'irculo, es decir,
  un toro $1$-dimensional (en el sentido tradicional de ``toro'').
  

\subsection{El grupo de transformaciones $\SL_2(\mathbb{R})$} 

Uno de varios modelos equivalentes para la geometr\'ia hiperb\'olica en dos
dimensiones es el semi-plano de Poincar\'e, tambi\'en llamado simplemente
 semi-plano superior:
\[\mathbb{H} = \{(x,y) \in \mathbb{R}^2: y>0\}\]
con la m\'etrica dada por 
\[ds = \frac{\sqrt{dx^2 + dy^2}}{y}.\]
Las isometr\'ias de $\mathbb{H}$ que preservan la orientaci\'on
son las transformaciones lineares fraccionales
\begin{equation}\label{eq:housecodd}
z\mapsto \frac{az+b}{cz+d},\end{equation}
donde $a,b,c,d\in \mathbb{R}$ y $ad-bc\ne 0$. Es f\'acil ver que
esto induce una biyecci\'on $\PSL_2(\mathbb{R})$ al conjunto de
transformaciones lineares fraccionales. Escribamos $g z$ para la imagen
de $z$ bajo una transformaci\'on (\ref{eq:housecodd}) inducida por una
matriz correspondiente a un elemento $g\in \PSL_2(\mathbb{R})$. No es nada
dif\'icil verificar que, para $g_1, g_2  \in \PSL_2(\mathbb{R})$,
\[g_1(g_2 z) = (g_1 g_2) z.\]
En otras palabras, tenemos un isomorfismo de $\PSL_2(\mathbb{R})$ al grupo
(con la composici\'on como operaci\'on)
de las transformaciones lineares fraccionales.

Podemos considerar subacciones; por ejemplo, el {\em grupo modular (completo)}
$\SL_2(\mathbb{Z})$ act\'ua sobre $\mathbb{H}$. El cociente
$\SL_2(\mathbb{Z})\backslash \mathbb{H}$ 
es de vol\'umen finito sin ser compacto.
Tambien podemos considerar los {\em grupos modulares de congruencia}
\[\Gamma(N) = 
\left\{\left(\begin{matrix}a & b\\c & d\end{matrix}\right)\in \SL_2(\mathbb{Z}):
a\equiv d\equiv 1 \mo N,\;\;\; b\equiv c\equiv 0 \mo N
\right\}\]
para $N\geq 1$. Claro est\'a, $\Gamma(N)$ es el n\'ucleo (``kernel'') de la
reducci\'on $\SL_2(\mathbb{Z})\to \SL_2(\mathbb{Z}/N\mathbb{Z})$,
y, por lo tanto,
 $\Gamma(N)\backslash \mathbb{H}$ consiste de
$|\SL_2(\mathbb{Z}/N\mathbb{Z})|$ copias de
 $\SL_2(\mathbb{Z})\backslash \mathbb{H}$.

\section{Perspectivas sobre el crecimiento en los grupos}\label{sec:dfsq}

El crecimiento en los grupos ha sido estudiado de varias perspectivas 
distintas. Nos concentraremos despu\'es en desarrollos relativamente 
recientes que toman sus herramientas en parte de algunas de estas \'areas
(clasificaci\'on de subgrupos, combinatoria aditiva) y su relevancia de otras
(el estudio de los di\'ametros y la expansi\'on). Hay a\'un otras areas
de suma importancia cuya relaci\'on con nuestro tema reci\'en comienza a 
elucidarse (teor\'ia de modelos, teor\'ia de grupos geom\'etrica). Demos
una mirada a vuelo de p\'ajaro.

{\em Combinatoria aditiva.} \'Este es en verdad un nombre reciente para
un campo de estudios m\'as antiguo, con una cierta intersecci\'on
con la {\em teor\'ia aditiva de los n\'umeros}. Se puede decir que la combinatoria aditiva se diferencia de esta \'ultima cuando considera el crecimiento
de conjuntos bastante arbitrarios, y no s\'olamente el de conjuntos como
los primos o los cuadrados. Uno de los resultados claves es el teorema
de Freiman \cite{MR0360496}, el cual clasifica los subconjuntos finitos 
$A\subset \mathbb{Z}$ tales que $A+A$ no es mucho m\'as grande que $A$.
Ruzsa dio una segunda prueba \cite{MR1139055}, m\'as general y m\'as simple,
e introdujo muchos conceptos ahora claves en el \'area.

El uso del signo $+$ y de la palabra {\em aditiva} muestran que,
hasta recientemente, la combinatoria aditiva estudiaba grupos abelianos,
si bien algunas de sus t\'ecnicas se generalizan a los grupos no abelianos
de manera natural. 

{\em Clasificaci\'on de subgrupos.}
Sea $A$ un subconjunto de un grupo $G$. Asumamos que $A$ contiene la identidad
$e\in G$. Entonces $|A\cdot A| = |A|$ s\'i y s\'olo s\'i $A$ es un subgrupo
de $G$; en otras palabras, clasificar los subgrupos de $G$ equivale a
clasificar los subconjuntos de $G$ que no crecen.

Clasificar los subgrupos de un grupo es a menudo algo lejos de trivial --
m\'as a\'un si se desea emprender tal tarea sin utilizar la Clasificac\'ion
de los grupos simples (una herramienta muy fuerte, cuya prueba
fue inicialmente juzgada incompleta o poco satisfactoria por muchos).
Resulta ser que los trabajos en este \'area basados sobre argumentos
elementales, antes que sobre la Clasificaci\'on, son a veces robustos: pueden
ser adaptados para darnos informaci\'on, no s\'olo sobre los subgrupos
de $G$, sino sobre los subconjuntos $A$ de $G$ que crecen poco.

{\em Di\'ametros y tiempos de mezcla.} 
Sea $A$ un conjunto de generadores de un grupo $G$; en otras palabras,
$A\subset G$ es tal que todo elemento $g$ de $G$ puede escribirse como un
producto $g = x_1 x_2 \dotsc x_r$ para alguna elecci\'on de $x_i\in A\cup A^{-1}$.
El {\em di\'ametro} de $G$ con respecto a $A$
es el $k$ m\'inimo tal que todo elemento
$g$ de $G$ puede escribirse como $g = x_1 x_2 \dotsc x_r$ con $x_i\in A$
y $r\leq k$. Si $G$ es finito, el di\'ametro es necesariamente finito.

Por qu\'e hablamos de ``di\'ametro''? El {\em gr\'afo de Cayley}
$\Gamma(G,A)$ es el grafo que tiene $G$ como su conjunto de v\'ertices
y $\{(g, a g): g\in G, a\in A\}$ como su conjunto de aristas. Podemos
definir la distancia $d(g_1,g_2)$
entre $g_1, g_2\in G$ como la longitud del camino
m\'as corto de $g_1$ a $g_2$, donde se define que la longitud de cada
arista es $1$. Definimos el di\'ametro de un grafo como definimos el
de cualquier figura: ser\'a el m\'aximo de la distancia $d(g_1,g_2)$
para toda elecci\'on posible de vertices $g_1$, $g_2$. Es f\'acil verificar
que $\diam(\Gamma(G,A))$ es igual al di\'ametro de $G$ con respecto a $A$
que acababamos de definir.

La conjetura de Babai \cite[p.~176]{BS88} postula que, si $G$ es simple
y no abeliano, entonces, para cualquier conjunto de generadores $A$ de $G$,
\[\diam(\Gamma(G,A))\ll (\log |G|)^{O(1)},\]
donde las constantes impl\'icitas son absolutas.

(Un poco de notaci\'on. Sean $f, g$ funciones de un conjunto $X$ a $\mathbb{C}$.
Como es habitual en la teor\'ia anal\'itica de n\'umeros, para nosotros,
  $f(x)\ll g(x)$, $g(x)\gg f(x)$ y $f(x) = O(g(x))$ quieren decir la misma
cosa: hay $C>0$ y $X_0\subset X$ finito (``constantes impl\'icitas'')
  tales que $|f(x)|\leq C\cdot g(x)$ para todo
  $x\in X$ fuera de $X_0$. (En verdad, necesitamos que $f(x)$ y $g(x)$ estén bien
  definidas s\'olo para $x$ fuera de $X_0$.)
    Escribimos $\ll_a$, $\gg_a$, $O_a$ si $X_0$ y $C$ dependen de
  una cantidad $a$ (digamos). Si $X_0$ y $C$ no dependen de nada, las llamamos constantes
  {\em absolutas}.)

  El {\em tiempo de mezcla} es el $k$ m\'inimo tal que, si
  $x_1,x_2,\dotsc,x_k$ son tomados al azar en $A$ con la distribuci\'on
  uniforme en $A$, la distribuci\'on del producto $x_1 \dotsb x_k$
  (o, lo que es lo mismo, la distribuci\'on
  del resultado de una caminata aleatoria de longitud $k$ en
  $\Gamma(G,A)$) esta cerca de la distribuci\'on uniforme en $G$. 
  Hablamos de distintos tiempos de mezcla dependiendo de lo que se quiera decir
  por ``cerca''. El estudio de los tiempos de mezcla ha tenido no solo
  un fuerte color probabil\'istico (v\'eanse las referencias \cite{MR1245303},
  \cite{MR2466937})
  sino a menudo tambi\'en algor\'itmico (e.g. en \cite{BBS04}).
  
{\em Expansores y huecos espectrales.}
Comenzemos dando una definici\'on elemental de lo que es un expansor.
Sea $A$ un conjunto de generadores de un grupo finito $G$. Decimos que
el grafo $\Gamma(G,A)$ es un $\epsilon$-expansor (para $\epsilon>0$ dado)
si todo subconjunto $S\subset G$ con $|S|\leq |G|/2$ satisface
$|S\cup AS|\geq (1+\epsilon) |S|$. Es muy simple de ver
que todo $\epsilon$-expansor tiene di\'ametro 
$O((\log |G|)/\epsilon)$, es decir, muy peque\~no; est\'a 
claro que el di\'ametro de
$\Gamma(G,A)$ es siempre por lo menos $O((\log |G|)/(\log |A|))$.

La alternativa (al final equivalente) es definir los grafos expansores en
t\'erminos del primer valor propio no trivial $\lambda_1$ del Laplaciano
discreto de un grafo de Cayley. La {\em matriz de adyacencia} (normalizada)
$\mathcal{A}$ de un grafo es un operador linear en el espacio de funciones
$f:G\to \mathbb{C}$; env\'ia tal funci\'on a la func\'ion cuyo valor
en $v$ es el promedio de $f(w)$ en los vecinos $w$ de $v$.
Para ser expl\'icitos, en el caso del grafo de Cayley $\Gamma(G,A)$,
\begin{equation}\label{eq:dudur}
  (\mathscr{A} f)(g) = \frac{1}{|A|} \sum_{a\in A} f(a g).\end{equation}
El {\em Laplaciano discreto} es simplemente $\triangle = I-\mathscr{A}$.
(Muchos lectores lo reconocer\'an como el
an\'alogo de un Laplaciano sobre una superficie.)

Asumamos $A = A^{-1}$. Entonces $\triangle$ es un operador sim\'etrico,
as\'i que todos sus valores propios son reales. Est\'a claro que
el valor propio m\'as peque\~no es $\lambda_0 = 0$, correspondiente
a las funciones propias constantes. Podemos ordenar los valores propios:
\[0 = \lambda_0 \leq \lambda_1 \leq \lambda_2 \leq \dotsc.\]
A la cantidad $|\lambda_1 - \lambda_0| = \lambda_1$ se le da el nombre
  de {\em hueco espectral}.
  
Decimos que $\Gamma(G,A)$ es un $\epsilon$-expansor si
$\lambda_1\geq \epsilon$. Para $|A|$ acotado, esta definici\'on es
equivalente a la primera que dimos (si bien la constante $\epsilon$
difiere en las dos definiciones). 
Decimos que una familia (conjunto infinito) de grafos $\Gamma(G,A)$
es una {\em familia de expansores}, o que es una familia con un 
hueco espectral, si todo grafo en la familia es un $\epsilon$-expansor
para alg\'un $\epsilon>0$ fijo.

Uno de los problemas centrales del \'area es probar que ciertas
familias (si no todas las familias) del tipo
\[\{\Gamma(\SL_2(\mathbb{Z}/p\mathbb{Z}),A_p)\}_{\text{$p$ primo}},
\;\;\;\;\;\;\;\;\;\text{$A_p$ genera
  $\SL_2(\mathbb{Z}/p\mathbb{Z})$}
\]
son familias de expansores. Los primeros resultados atacaban el
problema mediante el estudio del Laplaciano sobre las superficies
$\Gamma(p)\backslash \mathbb{H}$. Un resultado cl\'asico
de Selberg \cite{MR0182610} nos dice
que el Laplaciano sobre
$\Gamma(p)\backslash \mathbb{H}$ tiene un hueco espectral independiente
de $\epsilon$.

{\em Teor\'ia de grupos geom\'etrica. Teor\'ia de modelos.}
La teor\'ia de grupos geom\'etrica se centra en el estudio del crecimiento
de $|A^k|$ para $k\to \infty$, donde $A$ es un subconjunto de un grupo
infinito $G$. Por ejemplo, un teorema de Gromov \cite{MR623534}
muestra que, si $A$ genera a $G$
y $|A^k| \ll k^{O(1)}$, entonces $G$ tiene que ser
un grupo de un tipo muy particular (virtualmente nilpotente, para ser precisos).
Los argumentos de la teor\'ia de grupos geom\'etrica a menudo muestran
que, a\'un si un grupo no est\'a dado a priori de una manera geom\'etrica,
el crecimiento de un subconjunto puede darle de manera natural una geometr\'ia
que puede ser utilizada.

Si bien los problemas tratados por la teor\'ia de grupos geom\'etrica
son muy cercanos a los nuestros,
tales argumentos a\'un no son moneda corriente en el sub\'area que discutiremos
en estas notas, lo cual puede decir simplemente que la manera de aplicarlos
a\'un est\'a
por descubrirse. Por otra parte, la {\em teor\'ia de modelos}
-- en esencia, una rama de la l\'ogica con aplicaciones a las estructuras
algebraicas -- ha jugado un rol directo en el sub\'area. Por ejemplo,
Hrushovski \cite{MR2833482} dio una nueva prueba del teorema de Gromov,
expresando cuestiones del crecimiento en grupos en un lenguaje proveniente
de la teor\'ia de modelos; m\'as all\'a en esta direcci\'on, se debe
mencionar a \cite{BGTstru}. Se trata de temas que parecen estar lejos de
estar agotados.

\section{Resultados}\label{sec:res}

Uno de los prop\'ositos es dar una prueba del siguiente
resultado, debido al autor \cite{Hel08} para $K = \mathbb{Z}/p\mathbb{Z}$.
No ser\'a id\'entica a la primera prueba que diera, sino
que incluir\'a las ideas de varios autores posteriores, inclu\'idas
algunas que han hecho que el enunciado sea m\'as general que
el original, y otras que han hecho
que la prueba sea m\'as clara y
f\'acil de generalizar. En cualquier forma, el enunciado deriva claramente su
inspiraci\'on de la combinatoria aditiva.

\begin{thm}\label{thm:main08}
  Sea $K$ un cuerpo. Sea $A\subset \SL_2(K)$ un conjunto que genera
  $\SL_2(K)$.
  Entonces, ya sea
  \[|A^3| \geq |A|^{1+\delta}\]
  o $(A\cup A^{-1}\cup \{e\})^k  = \SL_2(K)$, donde $\delta>0$ y $k>0$ son
  constantes absolutas. 
\end{thm}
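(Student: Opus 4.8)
The plan is to follow the now-standard route: reduce the growth statement to a statement about how a generating set interacts with a maximal torus, and exploit the non-commutative structure of $\SL_2$. The key mechanism is that if $A$ grows slowly, then $A$ behaves like an ``approximate subgroup'': by Ruzsa-type inequalities from additive combinatorics (the non-commutative versions), $|A^3|\le |A|^{1+\delta}$ forces $|A^k|\le |A|^{1+O_k(\delta)}$ for all fixed $k$, so we may freely pass to bounded products without worsening the bound by more than a controlled amount. The heart of the argument is a \emph{pivoting} or \emph{escape-from-subvarieties} lemma: since $A$ generates $\SL_2(K)$ and $\SL_2(K)$ is not contained in any proper subvariety (in particular not in a union of boundedly many tori or Borels), a bounded-length product of elements of $A\cup A^{-1}\cup\{e\}$ must contain a regular semisimple element $g$, and in fact must contain enough of them to ``move'' any fixed torus.

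\textbf{Main steps.} First I would establish the escape lemma: there is an absolute $C$ such that $A_1:=(A\cup A^{-1}\cup\{e\})^C$ contains a regular semisimple element, and moreover contains elements not lying on any single conjugate of $D$; this uses only that $A$ generates and that the ``bad'' locus (trace $=\pm 2$) is a proper subvariety of bounded degree, together with a pigeonhole/dimension count. Second, fix a regular semisimple $g\in A_1$ and let $T=C(g)$ be the maximal torus it generates; the goal becomes to show that $A_1$ interacts richly with $T$. The decisive input is a \emph{sum-product-type} or \emph{trace-amplification} estimate: one shows that the set of traces (or, equivalently, the intersection pattern of $A_1 g A_1^{-1}$ with $T$) cannot be too structured, because $T$ together with a single element moving it generates $\SL_2(K)$ and the conjugation action is ``generic''. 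Concretely, one proves that either $|A\cdot T\cap A|$ is large for some translate — forcing, via the pivot argument and Ruzsa calculus, $|A^3|\ge |A|^{1+\delta}$ — or else $A_1$ already sweeps out all of $\SL_2(K)$ because the orbit of $T$ under conjugation by elements of $A_1$, multiplied together, exhausts the group. Third, I would combine these: assuming $|A^3|<|A|^{1+\delta}$, run the torus argument to show $A$ must contain a positive proportion of some torus, then use the Bruhat-type decomposition $\SL_2 = B w B$ (or the fact that two transverse tori generate) to bootstrap from ``large intersection with one torus'' to ``$A_1 = \SL_2(K)$'' for an absolute $k$.

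\textbf{The main obstacle.} The crux — and the step I expect to be hardest — is the trace-amplification / sum-product dichotomy: quantitatively controlling how a slowly-growing $A$ can intersect the tori, uniformly over \emph{all} fields $K$ (including $K=\overline{K}$ or $K$ infinite, where counting arguments must be replaced by dimension/Zariski-closure arguments). For finite $K$ this is essentially a sum-product estimate in the relevant field, but to get a statement valid over an arbitrary field one needs the ``geometric'' formulation: one works with the closure of $A$ or with the subfield generated by its entries, and shows that either $A$ lives in a bounded-complexity subvariety (contradicting that it generates, once $A^k=\SL_2(K)$ is the alternative) or the growth is genuine. Making the constants $\delta$ and $k$ truly absolute, independent of $K$, requires carefully tracking the degrees of all the subvarieties that appear (centralizers, the Bruhat cell, the conjugation orbit map) and invoking an effective form of Bezout's theorem throughout; this bookkeeping, rather than any single clever trick, is where the real work lies.
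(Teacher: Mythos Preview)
Your outline has the right skeleton --- Ruzsa calculus, escape to a regular semisimple element, a pivot argument around a torus --- but two of your load-bearing steps diverge from the paper's proof in ways that matter.

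First, you identify a sum-product estimate in $K$ as the crux and concede this is the main obstacle for arbitrary $K$. The paper sidesteps this entirely. In place of sum-product it proves a \emph{Larsen--Pink dimensional estimate}: for the trace variety $V_t=\{g\in\SL_2:\tr g=t\}$ one has $|A\cap V_t(K)|\ll |A^k|^{2/3}$ (Proposition~\ref{prop:juru}), proved by constructing a nearly-injective map $V_t\times V_t\to G$ and iterating down in dimension via B\'ezout. Combined with the orbit--stabilizer lemma for sets (Lemma~\ref{lem:lawve}), this upper bound on $|A\cap \Cl(g)|$ converts into a \emph{lower} bound $|A^{-1}A\cap C(g)|\gg |A|^{1/3-O(\delta)}$ for \emph{every} regular semisimple $g\in A^\ell$ (Corollary~\ref{cor:hutz}). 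This is the mechanism you are missing; it is field-independent and makes your ``main obstacle'' disappear.

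Second, your endgame --- ``large intersection with one torus, then Bruhat/two transverse tori bootstrap to $A^k=G$'' --- has a real gap. The lower bound you get is $|A\cap T|\gg |A|^{1/3-O(\delta)}$, not a positive proportion of $|T|$; for $|A|$ small this is far from enough to fill a torus or invoke $BwB$. The paper's pivot argument is instead a \emph{trichotomy} on whether a pivot $\xi$ (making $(a,t)\mapsto a\,\xi t\xi^{-1}$ injective) exists in $A$, in $G(K)$, or in neither. The interesting case is when \emph{no} $\xi\in G(K)$ is a pivot: then $A^{-1}A$ meets \emph{every} conjugate $\xi T\xi^{-1}$ nontrivially, so Corollary~\ref{cor:hutz} gives $\gg |A|^{1/3-O(\delta)}$ elements of $A^2$ in each of $\gg q^2$ distinct tori, forcing $|A|\ge |G|^{1-O(\delta)}$. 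One then finishes not with Bruhat but with the Gowers/Nikolov--Pyber high-multiplicity argument (Proposition~\ref{prop:diplo}) to conclude $A^3=G$. The mixed case (pivot exists in $G(K)$ but not in $A$) is handled by finding adjacent $\xi\notin$ pivots and $a\xi\in$ pivots with $a\in A$, combining both arguments.
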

Por cierto, gracias a \cite{MR2410393} y \cite{MR2800484},
$(A \cup A^{-1} \cup \{e\})^k = \SL_2(K)$ puede reemplazarse por $A^3$.
Mostraremos por lo menos que podemos tomar
$k=3$. Las primeras generalizaciones
a $K$ de orden finito no primo 
se deben a \cite{MR2788087} y \cite{MR2862040}; hoy en d\'ia, se obtiene la
forma general sin mayores complicaciones.

Veamos una consecuencia sencilla.
\begin{prob}
Sea $G = \SL_2(K)$, $K$ un cuerpo finito. Sea $A\subset G$ un conjunto que genera $G$.
El di\'ametro de $\Gamma(G,A)$ es $\ll (\log |G|)^{O(1)}$, donde
las constantes impl\'icitas son absolutas.
\end{prob}
Este enunciado es exactamente la conjetura de Babai para $G = \SL_2(K)$.

Cu\'ando es que $\Gamma(G,A)$ tiene di\'ametro $\ll \log |G|$?
Yendo m\'as lejos -- cu\'ando es un $\epsilon$-expansor?

Se sab\'ia desde los a\~{n}os 80 (ver las referencias en \cite{MR3348442})
que la existencia de un agujero
espectral para $\Gamma(p)\backslash \mathbb{H}$ (probada en
\cite{MR0182610}) implica que, para
\begin{equation}
A_0 = \left\{ \left(\begin{array}{cc} 1 &1\\0 &1\end{array}\right),
\left(\begin{array}{cc} 1 &0\\1 &1\end{array}\right)
\right\},\end{equation}
los gr\'afos $\Gamma(\SL_2(\mathbb{Z}/p\mathbb{Z}),A_0 \mo p)$ forman
una familia de expansores (i.e., son todos $\epsilon$-expansores
para alg\'un $\epsilon$ fijo).
Empero, para, digamos,
\begin{equation}\label{eq:relel}
A_0 = \left\{ \left(\begin{array}{cc} 1 &3\\0 &1\end{array}\right),
\left(\begin{array}{cc} 1 &0\\3 &1\end{array}\right)
\right\},\end{equation}
no se ten\'ia tal resultado, ni a\'un una cota razonable para el
di\'ametro. (\'Esta diferencia fue resaltada varias veces por Lubotzky.)

\begin{prob}\label{ej:luk}
  Sea $G = \SL_2(\mathbb{Z}/p\mathbb{Z})$; sea $A_0$ como en
  (\ref{eq:relel}). Pruebe que el di\'ametro de $\Gamma(G,A_0 \mo p)$
  es $\ll \log |G|$. 
\end{prob}
Para resolver este ejercicio, es \'util saber que $A_0$ genera
un subgrupo libre de $\SL_2(\mathbb{Z})$. Decimos que un conjunto $A_0$
en un grupo genera un {\em subgrupo libre}
si no existen $x_i\in A_0$, $1\leq i\leq k$, $x_{i+1}\notin \{x_i,x_i^{-1}\}$
para
$1\leq i\leq k-1$, $x_i\ne e$ para $1\leq i\leq k$,
y $r_i\in \mathbb{Z}$, $r_i\ne 0$, tales que
\[x_1^{r_1} \dotsb x_k^{r_k} = e.\]
Saber que $A_0$ genera un subgrupo libre
es particularmente \'util en los primeros pasos de la iteraci\'on; en los ultimos pasos, podemos utilisar el Teorema~\ref{thm:main08}. 

En verdad, la aseveraci\'on del ejercicio \ref{ej:luk} sigue siendo 
v\'alida sin la suposici\'on
que el grupo $\langle A_0\rangle$ generado por $A_0$ sea libre;
es suficiente (y f\'acil) mostrar que $\langle A_0\rangle$ siempre tiene un
subgrupo libre grande (ver el ap\'endice \ref{chap:appa}).
S\'i se debe asumir que
$\langle A_0\rangle$ genera un subgrupo {\em Zariski-denso} de $\SL_2$, para
as\'i asegurar que
$A_0 \mo p$ en verdad genere $\SL_2(\mathbb{Z}/
p\mathbb{Z})$, para $p$ mayor que una constante $C$.

Bourgain y Gamburd
\cite{MR2415383} fueron netamente m\'as lejos: probaron que, si
$A_0$ genera un grupo Zariski-denso de $\SL_2$, entonces
  \[\{\Gamma(\SL_2(\mathbb{Z}/p\mathbb{Z}),A_0 \mo p)\}_{\text{$p>C$, $p$ primo}}\]
es una familia de expansores.
 Nos concentraremos en dar una prueba del Teorema \ref{thm:main08}. Al final,
 esbozaremos el procedimiento de Bourgain y Gamburd, basado en parte sobre
 dicho teorema.

Queda a\'un mucho por hacer; por ejemplo, no sabemos si la familia
de todos los grafos
\[\{\Gamma(\SL_2(\mathbb{Z}/p\mathbb{Z}),A)\}_{\text{$p$ primo,
    $A$ genera $\SL_2(\mathbb{Z}/p\mathbb{Z})$}}\]
es una familia de expansores.
Por otra parte, si bien hay generalizaciones del teorema
\ref{thm:main08} a otros grupos lineares
(\cite{HeSL3}, \cite{GH1}, y, de manera m\'as general, \cite{BGT} y \cite{PS}), a\'un no tenemos una prueba
de la conjetura de Babai para los grupos alternantes $A_n$; la mejor
cota conocida para el di\'ametro de $A_n$ con respecto a un conjunto
arbitrario de generadores es la cota dada en \cite{MR3152942}, la
cual no es tan buena como $\ll (\log |G|)^{O(1)}$.


\chapter{Herramientas elementales}
\section{Productos triples}
La combinatoria aditiva, al estudiar el crecimiento, estudia los
conjuntos que crecen lentamente. En los grupos abelianos, sus resultados
son a menudo enunciados de tal manera que clasifican los conjuntos $A$
tales que $|A^2|$ no es mucho m\'as grande que $|A|$; en los grupos no-abelianos,
generalmente se clasifica los conjuntos $A$ tales que $|A^3|$ no es mucho
m\'as grande que $|A|$. Por qué?

En un grupo abeliano, si
$|A^2| < K |A|$, entonces $|A^k| < K^{O(k)} |A|$ -- i.e., si un conjunto
no crece después de ser multiplicado por si mismo una vez, no crecera
después de ser multiplicado por s\'i mismo muchas veces. \'Este es un
resultado de Pl\"unnecke \cite{MR0266892} y Ruzsa \cite{MR2314377};
Petridis \cite{MR3063158} dio recientemente una prueba particularmente elegante.

En un grupo no abeliano, puede haber conjuntos $A$ que rompen esta regla.
\begin{prob}
  Sea $G$ un grupo. Sean $H<G$, $g\in G\setminus H$ y
  $A = H \cup \{g\}$.
Entonces $|A^2| < 3 |A|$, pero $A^3\supset H g H$, y
$H g H$ puede ser mucho m\'as grande que $A$. Dé un ejemplo con
$G=\SL_2(\mathbb{Z}/p\mathbb{Z})$.
  \end{prob}

Empero, las ideas de Ruzsa s\'i se aplican al caso no abeliano, como fue
indicado en  \cite{Hel08} y \cite{MR2501249}; en verdad,
no hay que cambiar nada en \cite{MR810596}, pues nunca utiliza la
condici\'on que $G$ sea abeliano. Lo que obtendremos es que
basta con que $|A^3|$ (en vez de $|A^2|$) no sea mucho m\'as grande que
$|A|$ para que $|A^k|$ crezca lentamente.
Veamos como se hacen las cosas.

\begin{lem}[Desigualdad triangular de Ruzsa]\label{lem:schatte}
  Sean $A$, $B$ y $C$ subconjuntos finitos de un grupo $G$. Entonces
  \begin{equation}\label{eq:eolt}
|A C^{-1}| |B| \leq |A B^{-1}| |B C^{-1}| .\end{equation}
\end{lem}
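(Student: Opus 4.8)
The plan is to construct an injection from $AC^{-1}\times B$ into $AB^{-1}\times BC^{-1}$. The inequality \eqref{eq:eolt} will follow immediately once we exhibit such a map, so the whole proof reduces to choosing the map cleverly and checking injectivity.

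First I would introduce, for each element $d\in AC^{-1}$, a fixed ``witness'' decomposition: since $d\in AC^{-1}$ means $d = a c^{-1}$ for some $a\in A$, $c\in C$, I pick once and for all such a pair and write $a(d)\in A$, $c(d)\in C$ with $d = a(d)\,c(d)^{-1}$. (This is where the axiom of choice, or rather just a choice function on a finite set, enters; nothing deep.) Then define
\[
\varphi\colon AC^{-1}\times B \longrightarrow AB^{-1}\times BC^{-1},
\qquad
\varphi(d,b) = \bigl(a(d)\,b^{-1},\; b\,c(d)^{-1}\bigr).
\]
I should first check this is well-defined, i.e. that $a(d)b^{-1}\in AB^{-1}$ and $bc(d)^{-1}\in BC^{-1}$; both are immediate from $a(d)\in A$, $c(d)\in C$, $b\in B$.

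The key step is injectivity. Suppose $\varphi(d,b) = \varphi(d',b')$, so $a(d)b^{-1} = a(d')b'^{-1}$ and $bc(d)^{-1} = b'c(d')^{-1}$. Multiplying the two equations (first times second, in that order) gives $a(d)b^{-1}\cdot bc(d)^{-1} = a(d')b'^{-1}\cdot b'c(d')^{-1}$, that is $a(d)c(d)^{-1} = a(d')c(d')^{-1}$, which says exactly $d = d'$. Once $d = d'$ we have $a(d) = a(d')$, and then the first coordinate equation $a(d)b^{-1} = a(d')b'^{-1}$ forces $b = b'$. Hence $\varphi$ is injective, so $|AC^{-1}|\,|B| = |AC^{-1}\times B|\le |AB^{-1}\times BC^{-1}| = |AB^{-1}|\,|BC^{-1}|$, which is \eqref{eq:eolt}.

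I do not expect a genuine obstacle here; the only point requiring a little care is the bookkeeping in the non-abelian setting — one must keep the order of multiplication straight when combining the two coordinate equations, since it is precisely the cancellation $b^{-1}b = e$ in the middle that recovers $d$, and writing the product in the wrong order would insert a $b^{-1}\cdots b'$ that does not collapse. Apart from that, the argument is the standard Ruzsa triangle inequality and is entirely formal.
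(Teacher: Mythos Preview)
Your proof is correct and is essentially identical to the paper's own argument: the paper also fixes a choice $(f_1(d),f_2(d))=(a,c)$ with $d=ac^{-1}$, defines $\iota(d,b)=(f_1(d)b^{-1},\,b\,f_2(d)^{-1})$, and recovers $d$ (and then $b$) from the image. Your only addition is spelling out the cancellation $a(d)b^{-1}\cdot b\,c(d)^{-1}=d$ explicitly, which the paper summarizes in one line.
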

\begin{proof}
Contruiremos una inyecci\'on
$\iota:A C^{-1} \times B \hookrightarrow A B^{-1} \times
B C^{-1}$.
Para cada
$d\in A C^{-1}$, escojamos $(f_1(d),f_2(d)) = (a,c)\in A\times C$ tal que
$d = a c^{-1}$. Definamos $\iota(d,b) = (f_1(d) b^{-1}, b (f_2(d))^{-1})$.
Podemos recuperar $d = f_1(d) (f_2(d))^{-1}$ de $\iota(d,b)$; por lo tanto,
podemos recuperar $(f_1,f_2)(d)=(a,c)$, y as\'i tambi\'en $b$. Por lo tanto,
 $\iota$ es una inyecci\'on.
\end{proof}

\begin{prob}
  Sea $G$ un grupo.
Pruebe que
\begin{equation}\label{eq:mony}
  \frac {|(A \cup A^{-1} \cup \{e\})^3|}{|A|} \leq \left(3\frac{|A^3|}{|A|}\right)^3\end{equation}
  para todo conjunto finito $A$ de $G$. Muestre también que, si $A = A^{-1}$
  (i.e., si $g^{-1}\in A$ para todo $g\in A$), entonces
  \begin{equation}\label{eq:jotor}
    \frac{|A^k|}{|A|} \leq \left(\frac{|A^3|}{|A|}\right)^{k-2}.\end{equation}
  para todo $k\geq 3$. Concluya que
  \begin{equation}\label{eq:marmundo}\frac{|A^k|}{|A|} \leq
\frac{|(A\cup A^{-1} \cup \{e\})^k|}{|A|} \leq
3^{k-2} \left(\frac{|A^3|}{|A|}\right)^{3(k-2)}\end{equation}
  para todo $A\subset G$ y todo $k\geq 3$.
\end{prob}

Esto quiere decir que, de ahora en adelante, si obtenemos que $|A^k|$
no es mucho m\'as grande que $|A|$, podemos concluir que $|A^3|$ no
es mucho m\'as grande que $|A|$. Por cierto, gracias a (\ref{eq:mony}),
podremos suponer en
varios contextos que $e\in A$ y $A = A^{-1}$ sin pérdida de generalidad.

\section{El teorema de \'orbita-estabilizador para los conjuntos}
Una de las ideas recurrentes en la investigaci\'on del crecimiento en los grupos
es la siguiente: muchos enunciados acerca de los subgrupos -- as\'i como
sus métodos de prueba -- pueden generalizarse a los subconjuntos. Si el
método de prueba es constructivo, cuantitativo o probabil\'istico, esto es
un indicio que la prueba podr\'ia generalizarse de tal manera.

El {\em teorema de \'orbita-estabilizador} es un buen ejemplo, tanto
por su simplicidad (realmente deber\'ia llamarse ``lema'')
como por subyacer a un n\'umero sorpredente de resultados sobre el crecimiento.

Primero, un poco de lenguaje. Una {\em acci\'on} $G\curvearrowright X$
es un homomorfismo de un grupo $G$ al grupo de automorfismos de un
objeto $X$. Estudiaremos el caso en el que $X$ es simplemente un conjunto;
su ``grupo de automorfismos'' es simplemente el grupo de biyecciones de $X$
a $X$ (con la composici\'on como operaci\'on de grupo.) Para 
$A\subset G$ y $x\in X$, la {\em \'orbita} $A x$ (``\'orbita de $x$ bajo la
acci\'on de $A$'')
es el conjunto
$A x = \{g\cdot x : g\in A\}$. El {\em estabilizador} $\Stab(x)\subset G$ 
est\'a dado por $\Stab(x) = \{g\in G: g\cdot x = x\}$.

El enunciado que daremos es como en \cite[\S 3.1]{MR3152942}.
\begin{lem}[Teorema de \'orbita-estabilizador para conjuntos]\label{lem:orbsta}
  Sea $G$ un grupo actuando sobre un conjunto $X$.
  Sea $x\in X$, y sea $A\subseteq G$ no vac\'io.
Entonces
\begin{equation}\label{eq:applepie}
|(A^{-1} A) \cap \Stab(x)|\geq \frac{|A|}{|A x|}\end{equation}
y, para $B\subseteq G$,
\begin{equation}\label{eq:easypie}
  |B A| \geq |A \cap \Stab(x)| |B x| .
  \end{equation}
\end{lem}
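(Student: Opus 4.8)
The plan is to prove each of the two inequalities by an explicit counting/injection argument, mimicking the proofs of the classical orbit–stabilizer theorem but keeping careful track of which maps are injections rather than bijections. For \eqref{eq:applepie}, first I would fix, for each point $y$ in the orbit $Ax$, a representative $g_y\in A$ with $g_y x = y$; this is possible since $y\in Ax$. Then for an arbitrary $a\in A$, the point $ax$ lies in $Ax$, so $g_{ax}^{-1}a$ fixes $x$, i.e.\ $g_{ax}^{-1}a\in (A^{-1}A)\cap\Stab(x)$. The map $a\mapsto (ax,\; g_{ax}^{-1}a)$ sends $A$ into $Ax\times\bigl((A^{-1}A)\cap\Stab(x)\bigr)$ and is injective, because from the pair $(y,s)$ we recover $a = g_y s$. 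Hence $|A|\le |Ax|\cdot|(A^{-1}A)\cap\Stab(x)|$, which rearranges to \eqref{eq:applepie}.

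For \eqref{eq:easypie} I would argue similarly but on the product set $BA$. Write $S = A\cap\Stab(x)$ and pick for each $z\in Bx$ a representative $b_z\in B$ with $b_z x = z$. The claim is that the map $S\times Bx \to BA$ given by $(s,z)\mapsto b_z s$ is injective. Indeed $b_z s\in BA$ since $s\in A$; and given an element $w = b_z s$ of the image, we have $w x = b_z s x = b_z x = z$ because $s$ fixes $x$, so $z$ is determined by $w$; then $b_z$ is determined (it was a fixed choice), and $s = b_z^{-1}w$ is determined as well. Injectivity gives $|S|\cdot|Bx|\le|BA|$, which is \eqref{eq:easypie}. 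Taking $B = A^{-1}$ and combining with the trivial bound $|S| = |A\cap\Stab(x)|\ge |(A^{-1}A)\cap\Stab(x)|$ would not quite recover the first inequality, so the two parts are genuinely proved separately, though by the same device.

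The only real subtlety — and the step I would flag as the place to be careful — is the verification that these maps are well defined and injective without any group-theoretic regularity: $A$ and $B$ are arbitrary subsets, so one cannot invoke cosets or Lagrange's theorem, and one must check that the recovered representatives $g_y$, $b_z$ genuinely depend only on the image point and not on the preimage. Fixing the representatives once and for all at the start (a choice function on the orbit) is what makes this work, exactly as in the proof of Lemma~\ref{lem:schatte}. Everything else is bookkeeping: both displays follow immediately once the injections are in hand, and no appeal to Lemma~\ref{lem:schatte} or to any growth estimate is needed.
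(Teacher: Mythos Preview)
Your proof is correct and follows the approach the paper intends: the paper leaves the lemma as an exercise with the hint ``use the pigeonhole principle for \eqref{eq:applepie}'', and your explicit injection $a\mapsto(ax,\,g_{ax}^{-1}a)$ is precisely the pigeonhole argument made concrete, in the same spirit as the proof of Lemma~\ref{lem:schatte}. Your argument for \eqref{eq:easypie} via the injection $(s,z)\mapsto b_z s$ is likewise the natural one. One small slip in your closing remark: the inequality $|A\cap\Stab(x)|\ge|(A^{-1}A)\cap\Stab(x)|$ goes the wrong way in general, but since you correctly conclude that the two parts must be proved separately, this does not affect the argument.
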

El teorema de \'orbita-estabilizador usual, que se ense\~na usualmente en un primer curso de teor\'ia de grupos dice que, para $H$ un subgrupo de $G$,
\[|H\cap \Stab(x)| = \frac{|H|}{|H x|}.\]
\'Este es un caso especial del lema que estamos por probar --
el caso $A = B = H$.
\begin{prob}
  Pruebe el Lema \ref{lem:orbsta}. Sugerencia: 
  para (\ref{eq:applepie}), use el principio de los palomares.
\end{prob}

El grupo $G$ tiene la acci\'on evidente ``por la izquierda'' sobre
s\'i mismo: $g\in G$ act\'ua sobre los elementos $h\in H$
por multiplicaci\'on por la izquierda, i.e.,
\[g \mapsto (h\mapsto g\cdot h).\]
Est\'a tambien, claro est\'a, la acci\'on por la derecha
\[g \mapsto (h\mapsto h\cdot g^{-1}).\]
(Por qué es que $g\mapsto (h\mapsto h g)$ no es una acci\'on?)
Ninguna de estas dos acciones son interesantes
cuando se trata de aplicar directamente
el Lema \ref{lem:orbsta}, pues los estabilizadores
son triviales. Empero, tenemos tambi\'en la acci\'on {\em por conjugaci\'on}
\[g \mapsto (h\mapsto g h g^{-1}).\]
El estabilizador de un punto $h\in G$ no es sino su {\em centralizador}
$C(h)$, definido en (\ref{eq:pastrul});
la \'orbita de un punto $h\in G$ bajo la acci\'on de todo el grupo $G$
es la {\em clase de conjugaci\'on}
\[\Cl(h) = \{g h g^{-1}: g\in G\}.\]

As\'i, tenemos el siguiente resultado, crucial en lo que sigue. Su
importancia consiste en hacer que las cotas superiores (como las que
derivaremos m\'as tarde) sobre intersecciones con $\Cl(g)$
impliquen cotas inferiores sobre intersecciones con $C(g)$. 
La importancia de esto \'ultimo es que siempre es \'util saber que disponemos
de muchos elementos dentro de un {\em variedad} (tal como un toro).

\begin{lem}\label{lem:lawve}
  Sea $A\subset G$ un conjunto no vac\'io. Entonces,
  para todo $g\in A^l$, $l\geq 1$,
\[|A^{-1} A\cap C(g)|\geq \frac{|A|}{|A^{l+1} A^{-1}\cap \Cl(g)|}.\]
\end{lem}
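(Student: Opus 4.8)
The plan is to apply the orbit-stabilizer lemma for sets (Lemma~\ref{lem:orbsta}) to the conjugation action $G \curvearrowright G$, so that stabilizers become centralizers and orbits become conjugacy classes, exactly as set up in the paragraph preceding the statement. The subtlety is that the point $g$ whose centralizer we want to reach does not live in $A$ itself but only in some power $A^l$; so a direct application of \eqref{eq:applepie} with the acting set equal to $A$ would give us information about $A^{-1}A \cap C(h)$ for $h \in A$, not for $h = g \in A^l$. The trick is therefore to choose a cleverer acting set.

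First I would fix $g \in A^l$ and write $g = a_1 a_2 \cdots a_l$ with each $a_i \in A$. I want an acting set $S$ whose $g$-orbit (under conjugation) is controlled and for which $S^{-1}S$ sits inside $A^{-1}A$. The natural candidate is to take $S$ to be a single left translate, or rather to use that for the conjugation action the orbit of $g$ under $A$ is $AgA^{-1}$, and that $|A^{-1}A \cap C(g)| \geq |A|/|AgA^{-1}|$ is the literal content of \eqref{eq:applepie} with the acting set $A$ and the point $g$ — no hypothesis that $g \in A$ is needed, only that $g \in X = G$. So in fact the first inequality of Lemma~\ref{lem:orbsta}, applied verbatim with $x = g$ and acting set $A$, already gives
\[
|A^{-1}A \cap C(g)| \geq \frac{|A|}{|A\, g\, A^{-1}|}.
\]
The remaining task is purely to bound the denominator: $A g A^{-1} \subseteq A \cdot A^l \cdot A^{-1} = A^{l+1} A^{-1}$, and every element of $A g A^{-1}$ is a conjugate of $g$, hence lies in $\Cl(g)$. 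Therefore $A g A^{-1} \subseteq A^{l+1}A^{-1} \cap \Cl(g)$, which gives $|A g A^{-1}| \leq |A^{l+1}A^{-1} \cap \Cl(g)|$ and hence the claimed bound.

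I do not expect a genuine obstacle here: the whole content is recognizing that \eqref{eq:applepie} does not require the distinguished point to belong to the acting set, and then performing the one-line containment $AgA^{-1} \subseteq A^{l+1}A^{-1} \cap \Cl(g)$. The only thing to be slightly careful about is that $A$ is nonempty (so the lemma applies) and that $l \geq 1$ is used only to make sense of $g \in A^l$; the inequality $A \cdot A^l \subseteq A^{l+1}$ is immediate from the definition of powers of a set. If one instead wanted to keep the exposition parallel to the usual orbit-stabilizer statement, one could alternatively apply the second inequality \eqref{eq:easypie}, but the route through \eqref{eq:applepie} is the cleanest and is what I would write up.
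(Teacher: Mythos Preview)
Your proof is correct and matches the paper's: apply (\ref{eq:applepie}) to the conjugation action with acting set $A$ and point $x=g$, then note that the orbit $\{aga^{-1}:a\in A\}$ lies in $A^{l+1}A^{-1}\cap\Cl(g)$. One notational caution: writing the orbit as ``$AgA^{-1}$'' risks confusion with the set product $\{agb^{-1}:a,b\in A\}$, which is \emph{not} contained in $\Cl(g)$; in a write-up it is safer to display the orbit explicitly as $\{aga^{-1}:a\in A\}$.
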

\begin{proof}
  Sea $G\curvearrowright G$ la acci\'on de $G$ sobre s\'i mismo por conjugaci\'on. Aplique (\ref{eq:applepie}) con $x=g$;
  la \'orbita de $g$ bajo la acci\'on de  $A$ es un subconjunto de
  $A^{l+1} A^{-1}\cap \Cl(g)$.
\end{proof}

Es instructivo ver otras consecuencias de (\ref{eq:applepie}). La siguiente
nos muestra, por as\'i decirlo, que si obtenemos que la intersecci\'on
de $A$ con
un subgrupo $H$ de $G$ crezca, entonces hemos mostrado que $A$ mismo crece.
\begin{prob}
  Sea $G$ un grupo y $H$ un subgrupo de $G$. Sea $A\subset G$ un conjunto no
  vac\'io tal que
$A = A^{-1}$. Pruebe que, para todo $k>0$,
  \[|A^{k+1}|\geq \frac{|A^k\cap H|}{|A^2\cap H|} |A|.\]
(Sugerencia: considere la acci\'on $G\curvearrowright G/H$ por multiplicaci\'on por la izquierda, es decir, $g\mapsto (a H \mapsto g a H)$.)
\end{prob}


\chapter{Intersecciones con variedades}\label{chap:inter}
\section{Geometr\'ia algebraica extremadamente b\'asica}
\subsection{Variedades}
Una {\em variedad} (algebraica y af\'in)
en un espacio vectorial de $n$ dimensiones sobre
un cuerpo $K$ consiste en todos los puntos $(x_1,x_2,\dotsc,x_n)\in
\overline{K}^n$
que satisfacen un sistema de ecuaciones 
\begin{equation}\label{eq:qutu}
  P_i(x_1,\dotsc,x_n) = 0,\;\;\;\; 1\leq i\leq k,\end{equation}
donde $P_i$ son polinomios con coeficientes en $K$.

(Hay distintas maneras alternativas de formalizar el mismo concepto.
Podr\'iamos definir formalmente la variedad no exactamente
como el sistema de ecuaciones
en s\'i, sino como el conjunto de todas las ecuaciones polinomiales
implicadas por el sistema en (\ref{eq:qutu}). También hay definiciones de
apariencia mucho m\'as abstractas, basada en la teor\'ia de {\em esquemas}
(Grothendieck), pero no necesitaremos entrar all\'i.)

Se dice generalmente que los puntos $(x_1,\dotsc,x_n)$ que satisfacen
(\ref{eq:qutu}) {\em yacen} sobre la variedad, as\'i como hablamos de puntos
que yacen sobre una curva o superficie algebraica; claro est\'a, las curvas y las
superficies son casos especiales de variedades.
Dada una variedad $V$ definida sobre $K$ y un cuerpo $L$ tal que
$K\subset L \subset \overline{K}$, escribimos $V(L)$ por el conjunto
de todos los puntos  $(x_1,x_2,\dotsc,x_n)\in L^n$ que yacen sobre $V$.

El caso trivial es el de la variedad $\mathbb{A}^n$ ({\em espacio af\'in})
definida por el sistema vac\'io de ecuaciones (o por la ecuaci\'on 
$P(x_1,\dotsc,x_n)=0$, donde $P$ es el polinomio $0$). 
Claramente, $\mathbb{A}^n(L) = L^n$.

Dadas dos variedades $V_1$, $V_2$, tanto $V_1\cap V_2$ como
$V_1 \cup V_2$ son variedades: la variedad $V_1\cap V_2$ est\'a dada
por la uni\'on de las ecuaciones que definen $V_1$ y aquellas que definen
$V_2$, mientras que, si $V_1$ est\'a definida por (\ref{eq:qutu}) y $V_2$
est\'a definida por
\begin{equation}\label{eq:katu}
  Q_j(x_1,\dotsc,x_n) = 0,\;\;\;\; 1\leq j\leq k',\end{equation}
donde $Q_i$ son polinomios con coeficientes en $K$, entonces la variedad
$V_1\cup V_2$ est\'a dada por las ecuaciones
\[(P_i \cdot Q_j)(x_1,\dotsc,x_n) = 0,\;\;\;\; 1\leq i\leq k,
\;\;\;\; 1\leq i'\leq k'.\]

Consideremos ahora los grupos lineares, como $\SL_2(K)$. Est\'a claro que
$\SL_2(K)$ est\'a contenido en
\[M_2(\overline{K}) =
\left\{\left(\begin{matrix}x_1 & x_2\\x_3 & x_4\end{matrix}\right): x_1,x_2,x_3,x_4\in
  \overline{K}\right\},\]
  el cual es un espacio vectorial (de dimensi\'on $4$) sobre $\overline{K}$.
  Por lo tanto, tiene sentido hablar de variedades $V$
  en $M_2$. Por ejemplo, tenemos la variedad $V$
  de elementos que tienen una traza dada:
  \begin{equation}\label{eq:hut}
    \tr\left(\begin{matrix}x_1 & x_2\\x_3 & x_4\end{matrix}\right) = C,\;\;
    \text{i.e.,\;\;} x_1+x_4 - C = 0.\end{equation}
    
    Nuestro grupo $\SL_2$ es tambi\'en una variedad, dada por la
    ecuaci\'on $x_1 x_4 - x_2 x_3 = 1$. Es as\'i un ejemplo de un {\em
      grupo algebraico}. Estrictamente hablando, son los puntos
    $\SL_2(K)$ (o $\SL_2(L)$) del grupo algebraico $\SL_2$ los que
    forman un grupo en el sentido usual del t\'ermino. (Claro
    est\'a, la operaci\'on de grupo $\cdot:\SL_2\times \SL_2\to \SL_2$
    est\'a bien definida como un {\em morfismo de variedades afines} --
    una aplicaci\'on de una variedad a otra dado por polinomios.)

    Es f\'acil ver que un toro m\'aximo $T = C(g)$
    tambi\'en es un grupo algebraico, puesto que la ecuaci\'on
    \[h g = g h\]
    es un sistema de ecuaciones polinomiales (lineares, en verdad)
    sobre los coeficientes de $h\in \SL_2$. As\'i, $T$ es un subgrupo algebraico
    de $G$.
    
\subsection{Dimensi\'on, grado, intersecciones}
Dada una variedad $V$, una {\em subvariedad} $W$ es una variedad contenida en
\'el. Una variedad $V$ se dice irreducible si no es la uni\'on de dos
subvariedades no vac\'ias $W, W' \subsetneq V$.
Por ejemplo, $\SL_2$ es {\em irreducible}; \'esto es b\'asicamente
una consecuencia del hecho que el polinomio $x_1 x_4 - x_2 x_3 - 1$ es
irreducible.

     Podemos definir la {\em dimensi\'on} de un variedad
    irreducible $V$ como el entero $d$ m\'aximo tal que haya una cadena
    de variedades irreducibles no vac\'ias
    \[V_0\subsetneq V_1\subsetneq V_2 \subsetneq \dotsc \subsetneq V_d = V.\]
    \'Esto coincide con el concepto intuitivo de {\em dimensi\'on}:
    un plano contiene una l\'inea, que contiene a un
    punto, y as\'i un plano es de dimensi\'on por lo menos $2$; en verdad,
    es de dimensi\'on exactamente $2$.

    \noindent    {\em Hecho.} La dimensi\'on de $\mathbb{A}^n$ es $n$.

    En particular, la dimensi\'on de una variedad en $\mathbb{A}^n$ es
    siempre finita ($\leq n$).

        \begin{prob}
      Probemos que
      la intersecci\'on $\cap_{i\in I} V_i$ de una colecci\'on finita o infinita
      de variedades $V_i\in \mathbb{A}^n$, $i\in I$, es una variedad.

      Para una colecci\'on finita, \'esto es evidente. Por ende, para
      el caso infinito, bastar\'a si mostramos que hay un $S\subset I$
      finito tal que $\cap_{i\in I} V_i = \cap_{i\in S} V_i$.
    Muestre que esto se reduce a mostrar que una cadena de variedades
      \[\mathbb{A}^n \supsetneq W_1 \supsetneq W_2 \supsetneq W_3
      \supsetneq \dots\]
      debe ser finita. {\em (Esta propiedad se llama {\em propiedad Noetheriana}.)}

      Reduzca esto a su vez al caso de $W_1$ irreducible. Concluya
      por inducci\'on en la dimension de $W_1$.
    \end{prob}

\begin{prob}
  \begin{enumerate}
  \item Sea $V$ una variedad. Pruebe que
    se puede expresar $V$ como una uni\'on finita de variedades
irreducibles $W_1, W_2,\dotsc, W_k$. (Pista: use la propiedad Noetheriana.)
  \item   Sea $V$ irreducible. Muestre que,
    si $V$ es una uni\'on finita de variedades
      \[V_1, V_2,\dotsc, V_k,\] entonces
      existe un $1\leq i\leq k$ tal que $V_i = V$.
    \item Sea $V$ una variedad. Muestre que, si imponemos la condici\'on que
      $W_i \not\subset W_j$ para $i\ne j$, la decomposici\'on
      $V = W_1\cup W_2\cup \dotsc \cup W_k$ en variedades irreducibles
      $W_i$ es \'unica (excepto que,
      claro, los $W_i$ pueden permutarse).
      Las variedades $W_i$ se llaman {\em componentes irreducibles} de $V$.
  \end{enumerate}
\end{prob}

Si una variedad es una uni\'on de variedades irreducibles todas
    de dimensi\'on $d$, decimos que es ``de dimensi\'on pura'', y podemos
    decir que es de dimensi\'on $d$.

    Dadas dos variedades irreducibles
    $W\subset V$, la {\em codimensi\'on} $\codim(W)$ de $W$ en $V$ es
    simplemente $\dim(V)-\dim(W)$. Es f\'acil ver que, para $V$ irreducible,
    o bien $W=V$, o bien $\codim(W)>0$. Si $\codim(W)>0$ (o
    si $W$ es una uni\'on de variedades irreducibles de codimensi\'on
    positiva),
        podemos pensar en los 
        elementos de $W(\overline{K})$ como {\em especiales}, y en los
        elementos de $V(\overline{K})$ que no est\'an en $W(\overline{K})$
        como {\em gen\'ericos}.
        
 Es posible (y muy recomendable) considerar variedades m\'as generales
        que las variedades afines. Por ejemplo, podemos considerar
        las {\em variedades proyectivas}, definidas por sistemas de
        ecuaciones $P(x_0,x_1,\dotsc,x_n)=0$ donde cada $P$ es un 
polinomio homog\'eneo en $n+1$ variables.
 Los puntos
        en un variedad proyectiva viven en el {\em espacio proyectivo}
        $\mathbb{P}^{n}$. Los puntos del espacio proyectivo sobre un campo
        $L$ son elementos de
        $L^{n+1}$ (excepto $(0,0,\dotsc,0)$),
        donde se identifica dos elementos
        $x,x'\in L^{n+1}$ si $x$ es un m\'ultiplo
        escalar de $x'$, i.e., si $x = \lambda x'$
        para alg\'un $\lambda\in L$. En otras palabras,
        \[\mathbb{P}^n(L) =
        (L^{n+1}\setminus \{(0,0,\dotsc,0)\})/
        \sim,\;\;\; \text{donde $x\sim x'$ si $\exists \lambda\in
          L$ t.q. $x = \lambda x'$.}\]
        %

        La opci\'on de trabajar con variedades proyectivas nos da
        mucha libertad:
        en particular, es posible mostrar que podemos
        hablar de la variedad proyectiva de todas las
        l\'ineas (o todos los planos) en el espacio $n$-dimensional
        (proyectivo). El ejemplo m\'as simple es la variedad de todas
        las l\'ineas en el plano: como una l\'inea en el plano
        proyectivo ($n=2$) est\'a dada por una ecuaci\'on linear
        homog\'enea
        \[ c_0 x_0 + c_1 x_1 +  c_2 x_2 = 0,\]
        y como dos tales ecuaciones dan la misma l\'inea si
        sus triples $(c_1,c_2,c_3)$ son m\'ultiples el uno del otro,
        tenemos que las l\'ineas en el plano proyectivo est\'an en
        correspondencia uno-a-uno con $\mathbb{P}^2$ mismo.
        En $\mathbb{P}^n$, como en $\mathbb{A}^n$, podemos hablar
        de subvariedades, codimensi\'on, elementos gen\'ericos.
        Podemos hacer una inmers\'ion de $\mathbb{A}^n$ en $\mathbb{P}^n$:
        \[(x_1,x_2,\dotsc,x_n) \mapsto (1,x_1,x_2,\dotsc,x_n).\]
        El complemento es la subvariedad de $\mathbb{P}^n$ dada
        por $x_0=0$; para $n=2$, se le llama {\em recta en el infinito}.
        
        Contemplemos ahora una curva irreducible $C$ en el plano, es decir,
        una subvariedad de $\mathbb{A}^2$ de dimensi\'on $1$. (En verdad,
        no es necesaria la irreducibilidad; la supondremos realmente s\'olo
        al trabajar con an\'alogos en dimensiones superiores.)
        Consideremos
        tambi\'en una l\'inea $\ell$ en $\mathbb{A}^2$. Podr\'ia ser que
        $\ell$ fuera tangente a $C$, pero no es dif\'icil demostrar
        que tal es el caso s\'olo cuando $\ell$ yace en
        una subvariedad de $\mathbb{P}^2$ (la {\em curva dual a $C$}).
        En otras palabras, una l\'inea gen\'erica (se dice tambi\'en:
        {\em en posici\'on general}) no es tangente a $C$. El n\'umero
        de puntos de intersecci\'on en $\mathbb{P}^2(\overline{K})$
        de la curva $C$ con una l\'inea gen\'erica $\ell$ resulta ser
        independiente de $\ell$; llamamos a ese n\'umero el {\em grado}
        de $C$.

        Resulta ser que el grado de una curva irreducible
        en el plano dada por
        una ecuaci\'on \[P(x_1,x_2)=0\] (o por una ecuaci\'on $P(x_0,x_1,x_2)=0$,
        $P$ homogéneo) es simplemente el grado de $P$. La ventaja de la
        definici\'on que dimos del grado de una curva es que es m\'as
        conceptual y se generaliza de manera natural. En dimensiones
        superiores, la misma variedad puede ser definida por distintos
        sistemas de ecuaciones de grados distintos; deseamos una definici\'on
        de {\em grado} que dependa s\'olo de la variedad, y no del sistema
        que la define.

        Si $V$ es una variedad de dimensi\'on $2$ en $\mathbb{A}^3$,
        definimos su grado como su n\'umero de intersecciones con
        una recta gen\'erica; si $V$ es de dimensi\'on $1$
        en $\mathbb{A}^3$, definimos
        su grado como su n\'umero de intersecciones
        con un {\em plano} gen\'erico. As\'i como hablamos de l\'ineas
        y planos, podemos definir, en general,
        una {\em variedad linear} mediante
        ecuaciones lineares.
        Para $V$ una variedad irreducible en $\mathbb{A}^n$
        de dimensi\'on $d$, definimos el {\em grado} $\deg(V)$ de $V$
        como el n\'umero de intersecciones de $V$ con una variedad linear
gen\'erica
        de codimensi\'on $d$ en $\mathbb{A}^n$.
        La misma definici\'on
        es v\'alida cuando $V$ es no necesariamente irreducible pero de
        dimensi\'on pura e igual a $d$.

        Si bien este grado, como dec\'iamos, no tiene porque corresponder
        al grado de ninguna de las ecuaciones en un sistema de ecuaciones
        que defina a $V$, puede ser acotado por una constante
        que depende s\'olo de los grados de tales ecuaciones y su n\'umero.
        Ese es un caso especial de lo que estamos por discutir.

        El {\em teorema de B\'ezout}, en el plano, nos dice que, para dos
        curvas irreducibles distintas $C_1$, $C_2$ en $\mathbb{A}^2$, el
        n\'umero de puntos de la intersecci\'on $(C_1\cap C_2)(\overline{K})$
        es a lo m\'as $d_1 d_2$. (En verdad, si consideramos $C_1$ y $C_2$
        gen\'ericos, o trabajamos en $\mathbb{P}^2$ y contamos ``multiplicidades'', el n\'umero de puntos de intersecci\'on es {\em exactamente} $d_1 d_2$.)

        En general, si $V_1$ y $V_2$ son variedades irreducibles, y escribimos
        $V_1\cap V_2$ como una uni\'on de variedades irreducibles
        $W_1, W_2,\dotsc, W_k$, con $W_i\not\subset W_j$ para $i\ne j$,
        una generalizaci\'on del teorema de B\'ezout nos dice que
        \begin{equation}\label{eq:dur}
          \sum_{i=1}^k \deg(W_k) \leq \deg(V_1) \deg(V_2).\end{equation}
        (V\'ease, por ejemplo, \cite[p.251]{MR1658464},
        donde se menciona a Fulton y
        \foreignlanguage{english}{MacPherson} en conexi\'on a
        (\ref{eq:dur}) y enunciados m\'as generales.)
        Toda variedad irreducible de dimensi\'on $0$ consiste en un
        \'unico punto; por ello (\ref{eq:dur}) implica el teorema de
        B\'ezout habitual.
        
        \section{Escape de subvariedades}
        
        Sea $G$ un grupo que actua por transformaciones lineares
        sobre el espacio $n$-dimensional $K^n$, $K$ un cuerpo.
        (En otras palabras, se nos es dado un homomorfismo
        $\phi:G\to \GL_n(K)$ de $G$ al
        grupo de matrices invertibles $\GL_n(K)$.)
        Sea $W$ una variedad de codimensi\'on positiva en $\mathbb{A}^n$.
        Est\'abamos llamando a los elementos de $W(\overline{K})$
        {\em especiales}, y a los otros elementos de
        $\mathbb{A}^n(\overline{K})$ {\em gen\'ericos}.
        
        Sea $A$ un conjunto de generadores de $G$ y $x$ un punto de $W$.
        Muy bien podr\'ia
        ser que la \'orbita $A\cdot x$ est\'e contenida por entero en $W$.
        Empero, como veremos ahora, si $G x$ no est\'a contenida en $W$,
        entonces
        siempre es posible {\em escapar} de $W$ en un n\'umero acotado
        de pasos: no s\'olo que habr\'a (por definici\'on) alg\'un producto
$g$ de un n\'umero finito de elementos de $A$ y $A^{-1}$ 
tal que $g\cdot x$ est\'a fuera de $W$, sino que
        habr\'a un producto (a decir verdad, muchos productos)
        $g\in (A\cup A^{-1})^k$, $k$ {\em acotado},
        tal que $g\cdot x$ est\'a fuera de $W$. En otras palabras, si
        escapamos por lo menos una vez, eventualmente, de $W$, escapamos
        de muchas maneras de $W$, despu\'es de un n\'umero acotado $k$ de
        pasos.

        La prueba\footnote{El enunciado de la proposici\'on es c\'omo en \cite{HeSL3},
            basado en \cite{MR2129706}, pero la idea es probablemente
            m\'as antigua.} procede por inducci\'on en la dimensi\'on, controlando
        el grado.

        \begin{prop}\label{prop:huru}
          Sean dados:
          \begin{itemize}
            \item $G$, un grupo actuando por transformaciones lineares sobre
              $K^n$, $K$ un cuerpo;
            \item $W\subsetneq \mathbb{A}^n$, una variedad,
                      \item un conjunto de generadores $A\subset G$;
        \item un elemento $x\in \mathbb{A}^n(K)$ tal que $G \cdot x$ no est\'a contenido
          en $W$.
          \end{itemize}
          
          Entonces hay constantes $k$, $c$ que dependen s\'olo del n\'umero,
          dimensi\'on y grado de los componentes irreducibles de $W$, tales que
          hay por lo menos $\max(1,c |A|)$ elementos $g\in (A \cup A^{-1} \cup
          \{e\})^k$ tales que $g x\notin W(K)$.
        \end{prop}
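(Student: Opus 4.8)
The proof goes by induction on $d = \dim W$ (understanding that for $W$ not of pure dimension one takes the maximum dimension of an irreducible component), with the constants $k$ and $c$ tracked explicitly in terms of the number, dimension and degree of the irreducible components of $W$. The base case is $d = -\infty$, i.e.\ $W = \emptyset$: then every $g$ (including $g = e$) satisfies $g x \notin W(K)$, so we may take $k = 0$, $c = 1$ and there is nothing to prove, since $(A\cup A^{-1}\cup\{e\})^0 \ni e$ gives at least $1 \geq \max(1, c|A|)$ when $c \leq 1/|A|$; more carefully, for the induction we will want $c$ small, and the trivial case lets us start the recursion. Actually the cleaner base case is $\dim W = 0$: then $W(\overline K)$ is a finite set of points, of cardinality at most $\deg W$, and we argue as in the inductive step below but with the sub-induction terminating immediately.

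\textbf{Inductive step.} Suppose the claim holds for all varieties whose components have dimension $< d$, with constants depending only on the combinatorial data. Let $W$ have components of dimension $\leq d$, and let $x$ be such that $G\cdot x \not\subseteq W$. First I would reduce to $x \in W(K)$, since otherwise $g = e$ already escapes and, having found one escaping element, we can ``spread'' it: for each $a \in A\cup A^{-1}\cup\{e\}$ either $ax \notin W$ or $ax \in W$, and since $G\cdot(ax) = G\cdot x \not\subseteq W$ we could in principle recurse, but the quick route is to observe that escaping elements are closed under left-multiplication by $\Stab$-type considerations — more precisely, I will handle it uniformly by the following device. Since $G\cdot x \not\subseteq W$, there is some word $g_0 \in (A\cup A^{-1})^m$ with $g_0 x \notin W$; here $m$ is a priori unbounded, so this alone is not enough, and this is exactly the point the proposition must overcome. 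The key idea: consider the first index along the word at which we leave $W$. That is, writing a shortest such word as $g_0 = a_r \cdots a_1$ with $a_i \in A\cup A^{-1}$, let $j$ be minimal with $a_j \cdots a_1 x \notin W$; then $y := a_{j-1}\cdots a_1 x \in W(K)$ (or $y = x$ if $j = 1$), and there is a single generator $a = a_j \in A\cup A^{-1}$ with $a y \notin W$. So \emph{after translating $x$ to a point of $W$ by a bounded-length word is not yet possible}, but we have found $y \in W$ and $a$ with $ay \notin W$. Now here is where I diverge and use geometry instead: for $y \in W(K)$ lying on some irreducible component $W'$ of dimension $d$ (if $y$ lies only on lower-dimensional components we are already in the inductive case), consider the orbit $A \cdot y$. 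If $A \cdot y \not\subseteq W'$, then many elements $a \in A \cup A^{-1}$ already move $y$ off $W'$; the number that do \emph{not} is $|A \cap \Stab_{W'}|$-ish, and we bound it by noting that $\{a \in A\cup A^{-1} : ay \in W'\}$ is contained in a translate of a proper subvariety — here is where we invoke B\'ezout: the condition $ay \in W'$, as $a$ ranges over the variety $G \subseteq \GL_n$, cuts out a subvariety $Z$ of $G$, and either $Z = G$ (impossible, as then $G\cdot y \subseteq W'$, contradicting $G\cdot x \not\subseteq W$ since $G\cdot y = G\cdot x$... wait, $y$ need not be in $G\cdot x$; but $y = a_{j-1}\cdots a_1 x$, so indeed $G\cdot y = G \cdot x$, good) or $Z \subsetneq G$ has positive codimension and bounded degree by (\ref{eq:dur}).

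\textbf{Counting.} With $Z \subsetneq G$ of positive codimension and degree bounded in terms of the combinatorial data, I claim $|A \cap Z| \leq (1 - c')|A|$ for a constant $c' > 0$ depending only on that data — but this is \emph{false} in general for arbitrary generating sets $A$ (e.g.\ $A$ could lie entirely in $Z$ if $Z$ still generates). So instead the correct move is: since $A$ generates $G$ and $Z \subsetneq G$ is a proper subvariety, $A \not\subseteq Z$ (as $\langle Z \cap A\rangle \subseteq$ the subgroup generated, but being in a proper subvariety doesn't prevent generating — hmm). The genuinely correct approach, which I now commit to, is \textbf{escape applied to the $G$-action on itself}: we do \emph{not} try to bound $|A\cap Z|$ directly; instead we first apply the proposition recursively — to the action of $G$ on $\GL_n(K) \cong \mathbb{A}^{n^2}$ by left translation, the variety $Z$, the generating set $A$, and the point $e \in G$ — noting $G \cdot e = G \not\subseteq Z$. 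By induction on $\dim Z < \dim G$ this is circular; so the actual structure must be the double induction in \cite{HeSL3, MR2129706}: induct on $\dim W$ in the \emph{acting space}, and within that, the escape is produced by a greedy/pigeonhole argument on words of length $\leq k$. Concretely: for $k$ to be chosen, look at all $g \in (A\cup A^{-1}\cup\{e\})^k$; the set of those with $gx \in W$, call it $S_k$. We want $|S_k| \leq (1-c)|(A\cup A^{-1}\cup\{e\})^k|$, equivalently $\geq c|A|$ escape. Suppose not: then for \emph{every} word $w$ of length $\leq k-1$ with $wx \in W$, and \emph{almost every} generator $a$, $awx \in W$ too. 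Fix one reducible-case point and the component $W'$ through it of top dimension $d$; the map $a \mapsto a \cdot(wx)$ has image meeting $W'$ for a positive proportion of $a\in A$, and — using that $A$ generates and the irreducibility/degree of $W'$ via B\'ezout applied to $W' \cap (A\cup A^{-1})^3 \cdot (wx)$ — one shows the orbit $(A\cup A^{-1})^{k'}\cdot(wx)$ is forced into $W'$ for bounded $k'$ unless escape already occurred, and then $G\cdot(wx) \subseteq W'$, contradicting $G\cdot x \not\subseteq W$. Peeling off $W'$ drops us to a variety with fewer top-dimensional components or smaller dimension, closing the induction; the constants $k, c$ accumulate as bounded functions (products/sums) of the previous ones, hence depend only on the number, dimension and degree of the components of $W$. \textbf{The main obstacle} is precisely this bookkeeping: making the greedy ``leave $W$ at the first opportunity, then peel off the component you left'' argument produce a \emph{bounded} $k$ uniformly over all generating sets $A$, which is exactly where irreducibility + B\'ezout (\ref{eq:dur}) must be used to prevent the orbit from being ``trapped'' in a proper subvariety for arbitrarily long.
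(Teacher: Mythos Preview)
Your proposal circles the right inductive frame (induction on the dimension and number of components, with B\'ezout controlling degrees) but never lands on the key mechanism, and the three alternatives you try each break down for the reasons you half-notice.

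The clean move, which you do not find, is to act on the \emph{variety} rather than on the point. Assuming $e\in A=A^{-1}$: either $gW=W$ for every $g\in A$, in which case $W$ is $G$-invariant and so either $x\notin W$ (and then $gx\notin gW=W$ for all $g\in A$, giving $|A|$ escapes immediately) or $G\cdot x\subseteq W$ (contradiction); or there is $g\in A$ with $gW\ne W$. In the latter case set $W':=gW\cap W$. For $W$ irreducible this has strictly smaller dimension; in general it has strictly smaller complexity (fewer top-dimensional components, or lower dimension), with the number and degrees of components controlled by (\ref{eq:dur}). Trivially $G\cdot x\not\subseteq W'$, so by induction there are $\geq\max(1,c'|A|)$ elements $g'\in A^{k'}$ with $g'x\notin W'$; for each such $g'$, either $g'x\notin W$ or $g'x\notin gW$, i.e.\ $g^{-1}g'x\notin W$. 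Hence at least $\max(1,\tfrac{c'}{2}|A|)$ elements of $A^{k'+1}$ escape $W$.

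Compare with your attempts. The ``first exit along a shortest word'' produces $y\in W$ and $a\in A$ with $ay\notin W$, but $y$ sits at \emph{unbounded} distance from $x$, so this yields no uniform $k$; you cannot feed $y$ back into the induction because the statement is about escaping \emph{from $x$}. Recursing on $Z=\{h\in G:hy\in W'\}$ is not an instance of the same proposition for smaller data: $Z$ lives in $G\subset\mathbb{A}^{n^2}$ and has $\dim Z\geq\dim G-\codim W'$, which will typically exceed $\dim W$, so the induction is circular (as you observe). And the final $S_k$ argument is not a proof: the assertion that ``almost every generator keeps $wx$ in $W$'' forces $G\cdot x$ into a single component is simply false without further input, and you supply no device to bound $k$. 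The missing device is precisely the intersection $W\mapsto gW\cap W$ above.
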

        Para aclarar el proceso de inducci\'on, daremos primero la
        prueba en un caso particular. Una variedad {\em linear} es simplemente
        una l\'inea, un espacio, etc.; en otras palabras, es una variedad
        definida por ecuaciones lineares.
        \begin{proof}[Prueba para $W$ linear e irreducible]
          Sea $W$ linear e irreducible. Podemos asumir sin p\'erdida
          de generalidad que $A = A^{-1}$ y $e\in A$.
          
          Procederemos por inducci\'on
          en la dimensi\'on de $W$. Si $\dim(W)=0$,
          entonces $W$ consiste en un s\'olo punto $x_0$, y el enunciado que
          queremos probar
          es cierto: existe un $g\in A$ tal que
          $g x \ne x_0$ (por qu\'e?); si hay menos de $|A|/2$ tales
          elementos, escogemos un $g_0\in A$ tal que $g_0 x_0 \ne x_0$
          (por qu\'e existe?),
          y entonces, para cada uno de los m\'as de $|A|/2$ elementos
          $g\in A$ tales que $g x = x_0$, tenemos que
          $g_0 g x = g_0 x_0 \ne x_0$.

          Asumamos, entonces, que $\dim(W)>0$, y que el enunciado ha sido
          probado para todas las variedades lineares irreducibles $W'$
          con $\dim(W')<\dim(W)$. Si $g W = W$ para todo $g\in A$,
          entonces ya sea (a) $g x\not\in W(K)$ para todo $g\in A$,
          y el enunciado es inmediato,
          o (b) $g x\in W(K)$
          para todo $g\in G$ (puesto que $G$ est\'a generado por $A$), lo
          cual est\'a en contradicci\'on con nuestras suposiciones. Podemos
          asumir, entonces, que $g W \ne W$ para algun $g\in A$.

          Entonces $W' = g W \cap W$ es una variedad linear irreducible de
          dimensi\'on $\dim(W')<\dim(W)$. Por lo tanto, por la hip\'otesis
          inductiva, hay $\geq \max(1, c' |A|)$ elementos $g'$ de
          $A^{k'}$ (donde $c'$ y $k'$ dependen s\'olo de $\dim(W)$)
          tales que $g' x$ no yace en la variedad $W' = g W \cap W$.
          Entonces, para cada tal $g'$, ya sea $g^{-1} g' x$ o $g' x$ no
          yace en $W$. As\'i, hemos probado la proposicion con
          $c = c'/2$, $k = k'+1$.
          \end{proof}

        \begin{prob}
          Generalize la prueba que acabamos de dar de tal manera que
          d\'e Prop.~\ref{prop:huru} para $W$ arbitrario. {\em Sugerencia:}
          como un primer paso, generalice la prueba de tal manera que
          funcione para toda uni\'on $W$ de variedades lineares irreducibles.
          (\'Esto ya exigir\'a adaptar el proceso de inducci\'on de tal
          manera que se controle de alguna manera
          el n\'umero de componentes en cada paso. Claro est\'a, la
          intersecci\'on de dos uniones $W$ de $d$ variedades lineares
          irreducibles tiene a lo m\'as $d^2$ componentes.)
          Luego muestre que la prueba es v\'alida para toda union $W$
          de variedades irreducibles, no necesariamente lineares,
          utilizando la generalizaci\'on (\ref{eq:dur}) del teorema
          de B\'ezout.
          \end{prob}

\section{Estimaciones dimensionales}
Dado un conjunto de generadores $A\subset \SL_2(K)$ (o $A\subset \SL_n(K)$, o
lo que se desee) y una subvariedad $V$ de codimensi\'on positiva en
$\SL_2$, sabemos que una proporci\'on positiva de los elementos de
$A^k$, $k$ acotado, yacen fuera de $V$: \'este es un caso especial de la
Proposici\'on \ref{prop:huru} (con $x$ igual a la identidad $e$).

Aunque esto desde ya implica una
cota superior
para el n\'umero de elementos de $A^k$ en $V(K)$, podemos dar una cota
mucho mejor. Los estimados de este tipo pueden trazarse en parte a
\cite{LP} (caso de $A$ un subgrupo, $V$ general) y en parte a \cite{Hel08}
y \cite{HeSL3} ($A$ un conjunto en general, pero $V$ especial).
Tales cotas tienen en general la forma
\begin{equation}\label{eq:utur}
  |A\cap V(K)|\ll |(A\cup A^{-1} \cup \{e\})^k|^{\frac{\dim V}{\dim G}}.\end{equation}
Se lograron cotas completamente generales del tipo (\ref{eq:utur}) en
\cite{BGT} y \cite{PS} ($A$ y $V$ arbitrarios, $G$ un grupo linear algebraico
simple, como en \cite{LP}). 

Como primero paso hacia la estrategia general, veamos un caso particular
de manera muy concreta (aunque no lo usemos al final).
La prueba es b\'asicamente la misma que en
 \cite[\S 4]{Hel08}.

\begin{lem}\label{lem:rokto}
  Sea $G=\SL_2$, $K$ un cuerpo, y $T$ un toro m\'aximo.
  Sea $A\subset G(K)$ un conjunto de generadores de $G(K)$. 
  Entonces
  \begin{equation}\label{eq:clodo}
    |A\cap T(K)|\ll |(A \cup A^{-1}\cup \{e\})^k|^{1/3}\end{equation}
  donde $k$ y la constante impl\'icita son constantes absolutas.
\end{lem}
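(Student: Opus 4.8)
The statement to prove is that for $G=\SL_2$, $K$ a field, $T$ a maximal torus, and $A$ a generating set of $G(K)$, one has $|A\cap T(K)|\ll |(A\cup A^{-1}\cup\{e\})^k|^{1/3}$ for absolute constants $k$. The plan is to exploit the fact that $T$ is abelian while $G$ is not, so that the "many elements in $T$" situation forces a genuine product set in a direction transverse to $T$. Throughout I may assume $e\in A$ and $A=A^{-1}$, at the cost of replacing $A^3$ by $(A\cup A^{-1}\cup\{e\})^3$ via the exercise following Lemma~\ref{lem:schatte}; write $A$ for this symmetrized set and $S=A\cap T(K)$.

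\textbf{Step 1: escape from the normalizer of $T$.}
Since $A$ generates $G(K)$ and $G(K)$ does not normalize $T$ (the normalizer $N(T)$ is a proper subvariety, indeed $\dim N(T)=\dim T=1<3=\dim G$), Proposition~\ref{prop:huru}, applied with $W=N(T)$ and $x=e$, gives an absolute $k_0$ and an element $g\in A^{k_0}$ with $gTg^{-1}\ne T$. Since $T$ is a maximal torus, $T\cap gTg^{-1}$ is finite (a proper algebraic subgroup of the one-dimensional $T$ has dimension $0$, hence bounded cardinality by Bézout (\ref{eq:dur})). Fix such a $g$.

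\textbf{Step 2: a triple-product injection.}
Consider the map $\phi:S\times S\times S\to A^{2k_0+3}$ given by $(s_1,s_2,s_3)\mapsto s_1\, g\, s_2\, g^{-1}\, s_3$. The claim is that $\phi$ is injective. Suppose $s_1 g s_2 g^{-1} s_3 = s_1' g s_2' g^{-1} s_3'$. Rearranging, $(s_1'^{-1}s_1)\, g\, (s_2 s_2'^{-1})\, g^{-1} = s_3' s_3^{-1}$; the right side lies in $T$, so conjugating, $s_2 s_2'^{-1}\in g^{-1}Tg\cap T$, which is a fixed finite set $F$ of size $O(1)$. Thus $s_2'$ ranges over at most $|F|$ values once $s_2$ is fixed; the analogous bound on the transverse direction must be made to pin down $s_1$ as well. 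The cleanest way to get exact injectivity is to choose $g$ more carefully: one wants $g s_2 g^{-1}$ and $g s_2' g^{-1}$ to determine $s_2=s_2'$, which holds because conjugation by $g$ is injective, and then $s_1'^{-1}s_1 \cdot (g s_2 g^{-1}) (g s_2' g^{-1})^{-1} = s_3' s_3^{-1}\in T$ forces $s_1'^{-1}s_1 \cdot h \in T$ where $h = g s_2 g^{-1} (g s_2' g^{-1})^{-1}$; if $h\notin N(T)$ this is impossible unless $s_1 = s_1'$ and $h=e$, i.e. $s_2=s_2'$, i.e. $s_3=s_3'$. So the required input is that $g s_2 g^{-1}(g s_2' g^{-1})^{-1}$ avoids $N(T)$ unless $s_2=s_2'$; since $gTg^{-1}\cap N(T)$ is a proper subvariety of $gTg^{-1}$, a further application of Proposition~\ref{prop:huru} (escaping from the preimage of this subvariety) lets us replace $S$ by a subset $S'$ with $|S'|\gg|S|$ for which this holds, after conjugating by one more bounded-length element. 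Then $\phi$ restricted to $S'\times S'\times S'$ is injective.

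\textbf{Step 3: conclusion.}
Injectivity of $\phi$ gives $|S'|^3 \le |A^{2k_0+3}|$, hence $|S'|\ll |A^{k}|^{1/3}$ with $k=2k_0+3$ absolute, and since $|S|\ll|S'|$ and $|A^{k}|$ is the symmetrized product set, (\ref{eq:clodo}) follows. \textbf{The main obstacle} is Step 2: making the triple-product map genuinely injective rather than merely bounded-to-one. The honest fix is either (i) to track the finite fibers and absorb the $O(1)$ loss into the implied constant—harmless since we only claim a $\ll$ bound—or (ii) to arrange, via one or two extra applications of the escape proposition, that the conjugating element $g$ and the points of $S$ are in suitably general position relative to $N(T)$ and $gTg^{-1}$. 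Option (i) is the quickest and suffices; one replaces "injective" by "fibers of size $O(1)$" and the inequality $|S|^3 = O(|A^k|)$ still holds. The one genuinely nontrivial ingredient borrowed from earlier is Proposition~\ref{prop:huru}, used twice, plus the Bézout bound (\ref{eq:dur}) to control $|T\cap gTg^{-1}|$.
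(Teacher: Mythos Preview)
Your overall strategy---use escape to find a conjugator $g$ and then show the product map $(s_1,s_2,s_3)\mapsto s_1\,g s_2 g^{-1}\,s_3$ has bounded fibers---is exactly the paper's. But Step~2 contains two real errors.

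First, the ``rearranging'' is wrong. From $s_1 g s_2 g^{-1} s_3 = s_1' g s_2' g^{-1} s_3'$ you get
\[
(s_1'^{-1}s_1)\,(g s_2 g^{-1})\,(s_3 s_3'^{-1}) \;=\; g s_2' g^{-1},
\]
not $(s_1'^{-1}s_1)\,g(s_2 s_2'^{-1})g^{-1}=s_3' s_3^{-1}$. You have implicitly commuted $g s_2' g^{-1}$ past elements of $T$, which is precisely what fails when $g\notin N(T)$. So the conclusion $s_2 s_2'^{-1}\in T\cap g^{-1}Tg$ is unjustified, and the whole fiber bound collapses.

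Second, and more seriously, escaping only from $N(T)$ is not enough: there exist $g\notin N(T)$ for which the map has \emph{infinite} fibers, so option~(i) (``absorb the $O(1)$ loss'') does not save you. Take $T$ diagonal and $g=\left(\begin{smallmatrix}0&1\\-1&1\end{smallmatrix}\right)$; then $g\notin N(T)$, yet the $(1,2)$ entry of $s_1\,g s_2 g^{-1}\,s_3$ vanishes identically (compute it), leaving only two independent coordinates to recover three parameters. The paper avoids this by escaping instead from the variety $\{abcd=0\}$ (all four matrix entries nonzero), which is strictly larger than $N(T)$, and then does an explicit entrywise calculation showing fibers have size at most~$16$. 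The genericity condition you need is not ``$gTg^{-1}\ne T$'' but the stronger ``$g$ has no zero entry'' (after diagonalizing $T$); once you use Prop.~\ref{prop:huru} to find such a $g\in A^\ell$, the explicit computation goes through.
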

\begin{proof}
  Podemos suponer sin p\'erdida de generalidad que
  $|K|$ es mayor que una constante, pues, de lo contrario, la conclusi\'on
  es trivial. Tambi\'en
  podemos suponer sin p\'erdida de generalidad que $A = A^{-1}$, $e\in A$,
  y que $|A|$ es mayor que una constante, reemplazando $A$ por
  $(A \cup A^{-1}\cup \{e\})^c$, $c$ constante, de ser necesario. 
    Podemos tambi\'en escribir los elementos de $T$ como
  matrices diagonales, conjugando por un elemento de
  $\SL_2(\overline{K})$.

  Sea \begin{equation}\label{eq:murut}
    g=\left(\begin{matrix} a & b\\ c & d\end{matrix}\right)\end{equation} un elemento
  cualquiera de $\SL_2(\overline{K})$ con $a b c d\ne 0$. Consideremos la aplicaci\'on
  $\phi:T(K) \times T(K) \times T(K) \to G(K)$ dada por
  \[\phi(x, y, z) = x \cdot g y g^{-1} \cdot z.\]
  Queremos mostrar que esta aplicaci\'on es en alg\'un sentido cercana
  a ser inyectiva. (La raz\'on de tal estrategia? Si la aplicaci\'on fuera
  inyectiva, y tuvieramos $g\in A^\ell$, $\ell$ una constante,
  entonces tendr\'iamos
 \[|A\cap T(K)|^3 = |\phi(A\cap T(K),A\cap T(K),A\cap T(K))| 
  \leq |A A^\ell A A^{-\ell} A| = |A^{2\ell+3}|,\]
  lo cual implicar\'ia inmediatamente el resultado que queremos.
Simplemente estamos usando el hecho que el tama\~no de la imagen $\phi(D)$
de una inyecci\'on $\phi$ tiene el mismo n\'umero de elementos que
su dominio $D$.)

Multiplicando matrices, vemos que, para 
\[x=\left(\begin{matrix} r & 0 \\0 & r^{-1}\end{matrix}\right),\;\;
y = 
\left(\begin{matrix} s & 0 \\0 & s^{-1}\end{matrix}\right),\;\;
z = 
  \left(\begin{matrix} t & 0 \\0 & t^{-1}\end{matrix}\right),\]
$\phi((x,y,z))$  es igual a
\begin{equation}\label{eq:malvot}  \left(\begin{matrix} r t (s a d - s^{-1} b c) &
    r t^{-1} (s^{-1} - s) a b\\
    r^{-1} t (s - s^{-1}) c d &
r^{-1} t^{-1} (s^{-1} a d - s b c)\end{matrix}\right).\end{equation}
Sea $s\in \overline{K}$ tal que $s^{-1} - s \ne 0$ y
$s a d - s^{-1} b c \ne 0$.
Un breve c\'alculo muestra que entonces
$\phi^{-1}(\{\phi((x,y,z))\})$ tiene a lo m\'as $16$ elementos: tenemos que
\[  r t^{-1} (s^{-1} - s) a b \cdot 
    r^{-1} t (s - s^{-1}) c d  = - (s - s^{-1})^2 a b c d,\]
    y, como $a b c d \ne 0$, hay a lo m\'as $4$ valores de $s$
    dado un valor de $- (s-s^{-1})^2 a b c d$ (el producto de las esquinas
    superior derecha e inferior izquierda de (\ref{eq:malvot})); para cada tal valor de $s$,
    el producto y el cociente de las esquinas superior izquierda
    y superior derecha de (\ref{eq:malvot}) determinan $r^2$ y $t^2$,
    respectivamente, y obviamente hay s\'olo $2$ valores de $r$ y
    $2$ valores de $t$ para $r^2$ y $t^2$ dados.

    Ahora bien, hay a lo m\'as $4$ valores de $s$ tales que
    $s^{-1} - s = 0$ o $s a d - s^{-1} b c = 0$.
    Por lo tanto, tenemos que
    \[|\phi(A\cap T(K),A\cap T(K),A\cap T(K))| \geq
    \frac{1}{16} |A\cap T(K)| (|A\cap T(K)| - 4) |A\cap T(K)|,\]
    y, como antes,
    $\phi(A\cap T(K),A\cap T(K),A\cap T(K))\subset
    A A^\ell A A^{-\ell} A = A^{2\ell+3}$.
    Si $|A\cap T(K)|$
    es menor que $8$ (o cualquier
    otra constante) entonces la conclusi\'on (\ref{eq:clodo}) es trivial.
    Por lo tanto, conclu\'imos que
    \[|A\cap T(K)|^3 \leq 2 |A\cap T(K)| (|A\cap T(K)| - 4) |A\cap T(K)|
    \leq 32 |A^{2\ell+3}|,\]
    i.e., (\ref{eq:clodo}) es cierta.

    S\'olo queda verificar que existe un elemento (\ref{eq:murut})
    de $A^\ell$ con $a b c d \ne 0$. Ahora bien, $a b c d= 0$
    define una subvariedad $W$ de $\mathbb{A}^4 \sim M_2$;
    m\'as a\'un, para $|K|>2$,
    existen elementos de $G(K)$
    fuera de tal variedad. Por lo tanto, las condiciones de
    Prop.~\ref{prop:huru} se cumplen (con $x$ igual a la identidad $e$).
    As\'i, obtenemos que existe $g\in A^\ell$ ($\ell$ una constante)
    tal que $g\not\in W(K)$, lo cual era lo que necesit\'abamos.
\end{proof}

Hagamos abstracci\'on de lo que acabamos de hacer, para as\'i poder
generalizar el resultado a una variedad arbitraria $V$ en vez de $T$.
Trataremos el caso de $V$ de dimensi\'on $1$, por conveniencia.
La estrategia de la prueba del Lema \ref{lem:rokto} consiste en construir
un morfismo $\phi:V\times V\times \dotsb \times V\to G$
($r$ copias de $V$, donde $r = \dim(G)$) de la forma
\begin{equation}\label{eq:kukuku}
  \phi(v_1,\dotsc,v_{r}) = v_1 g_1 v_2 g_2 \dotsb v_{r-1} g_{r-1} v_r,\end{equation}
donde $g_1,g_2,\dotsc,g_{r-1}\in A^\ell$, y mostrar 
que, para $v = (v_1,\dotsc,v_{r})$ gen\'erico (es decir, fuera de una subvariedad de $V\times \dotsb \times V$
de codimensi\'on positiva), la preimagen
$\phi^{-1}(\phi(v))$ tiene dimensi\'on $0$. En verdad, como acabamos de ver,
es suficiente mostrar que esto es cierto para $(g_1,g_2,\dotsc,g_{r-1})$
un elemento gen\'erico de $G^{r-1}$; el argumento de escape (Prop.~\ref{prop:huru}) se encarga del
resto.

Para hacer que el argumento marche para $V$ general (y $G$ general),
es necesario asumir algunos fundamentos. Esencialmente, tenemos la
elecci\'on de ya sea trabajar sobre el \'algebra de tipo Lie
o introducir un poco m\'as de geometr\'ia algebraica.
La primera elecci\'on (tomada en \cite{MR3348442}, siguiendo a
\cite{HeSL3}) asume que el lector tiene cierta familiaridad con los
grupos y \'algebras de Lie, y que sabe, o est\'a dispuesto a creer,
que la relaci\'on entre grupos y \'algebras de Lie sigue la misma
si trabajamos sobre un cuerpo finito en vez de $\mathbb{R}$ o
$\mathbb{C}$. La segunda elecci\'on -- que tomaremos aqu\'i --
requiere saber, o estar listo a
aceptar, un par de hechos b\'asicos sobre morfismos, v\'alidos sobre
cuerpos arbitrarios.

No importa gran cosa si se sigue el uno u el otro formalismo.
Los fundamentos, en uno y otro caso, se sentaron s\'olidamente en la
primera mitad del siglo XX (Zariski, Chevalley, etc.) y son
relativamente accesibles. Los lectores que sientan inter\'es en estudiar
las bases del camino que seguiremos est\'an invitados a leer
\cite[Ch. 1]{MR1748380} (de por s\'i una excelente idea) o cualquier
texto similar.

 Est\'a claro que, si $\phi:\mathbb{A}^n\to \mathbb{A}^m$ es un morfismo
 y $V\subset \mathbb{A}^m$ es una variedad, entonces la preimagen
 $\phi^{-1}(V)$ es una variedad (por qu\'e?). Algo nada evidente que
 utilizaremos es el hecho que, si $\phi$ es como dijimos y
 $V\subset \mathbb{A}^n$ es una variedad,
 entonces  $\phi(V)$ es un {\em conjunto constru\'ible}, lo cual quiere decir
 una uni\'on finita de t\'erminos de la forma $W\setminus W'$,
 donde $W$ y $W'\subset W$ son variedades. (Por ejemplo, si $V\subset \mathbb{A}^2$
 es la variedad dada por $x_1 x_2 = 1$ (una hip\'erbola), entonces
 su imagen bajo el morfismo $\phi(x_1,x_2) = x_1$ es el conjunto
 constru\'ible $\mathbb{A}^1 \setminus \{0\}$.)
 \'Este es un teorema de Chevalley \cite[\S I.8, Cor.~2]{MR1748380};
 encapsula parte del
campo cl\'asico llamado {\em teor\'ia de la eliminaci\'on}.
Es f\'acil deducir, que, en general, para $V$ constru\'ible, $\phi(V)$
es constru\'ible.

Siempre podemos expresar un conjunto constru\'ible $S$ como una uni\'on
$\cup_i (W_i\setminus W'_i)$ con $\dim(W'_i)<\dim(W_i)$. (Por qu\'e?)
La {\em clausura de Zariski} $\overline{S}$
del conjunto constru\'ible $S$ es entonces
$\cup_i W_i$.

El siguiente lema es \cite[Prop. 1.5.30]{MR3309986},
lo cual es a su vez en esencia \cite[Lemma 4.5]{LP}.
(Murmullo de un mundo paralelo: en el formalismo que no seguimos, esto
corresponde al hecho, b\'asico pero no trivial, que el \'algebra de un
grupo de tipo Lie simple es simple.)

Decimos que un grupo algebraico $G$ es {\em casi simple}
si no tiene ning\'un subgrupo algebraico normal $H$ de dimensi\'on
positiva y menor que $\dim(G)$.
(Por ejemplo, $\SL_n$ es casi simple para todo $n\geq 2$.)
\begin{lem}\label{lem:tush}
  Sea $G\subset \SL_n$ un grupo algebraico irreducible y
  casi simple definido sobre
  un cuerpo $K$.
  Sean $V', V\subsetneq G$ subvariedades con $\dim(V')>0$.
  Entonces, para todo
  $g\in G(\overline{K})$ fuera de una subvariedad $W\subsetneq G$,
  alg\'un componente de la clausura de Zariski $\overline{V' g V}$
  tiene dimensi\'on $>\dim(V)$.

  M\'as a\'un, el n\'umero de componentes
  de $W$ y sus grados est\'an acotados por una constante que depende s\'olo
  de $n$ y del n\'umero y grados de los componentes de $V'$ y $V$.
\end{lem}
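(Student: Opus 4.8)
The plan is to reduce the statement to a dimension-counting fact about the multiplication map $m\colon V'\times G\times V\to G$, $(v',g,v)\mapsto v'gv$, and then to exploit the almost-simplicity of $G$ to rule out the bad case. First I would consider the constructible set $S=\overline{V'GV}$ (or rather work fiberwise). For a fixed $g$, write $f_g(v',v)=v'gv$; the image $V'gV$ is constructible by Chevalley's theorem, so its Zariski closure $\overline{V'gV}$ is a well-defined variety whose dimension is some integer $d(g)$. The key observation is that $d(g)$ is lower semicontinuous in a suitable sense — more precisely, the set of $g$ for which $\dim\overline{V'gV}\le \dim(V)$ is itself contained in a proper subvariety $W\subsetneq G$, provided it is not all of $G$. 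So the real content is: it is \emph{impossible} that $\dim\overline{V'gV}\le\dim(V)$ for \emph{every} $g\in G(\overline K)$.

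To prove that impossibility, suppose for contradiction that $\dim\overline{V'gV}\le\dim(V)$ for all $g$. Passing to an irreducible component $V_0$ of $V$ of top dimension and an irreducible component $V'_0$ of $V'$ with $\dim(V'_0)>0$ (here I use $\dim(V')>0$), I would look at the set $H=\{h\in G: hV_0 = V_0\}$ — no, better: consider instead the ``stabilizer-type'' set obtained by intersecting translates. The cleaner route is: fix $v'_0\in V'_0(\overline K)$; then for each $g$, $v'_0 g V_0\subseteq \overline{V'gV}$ has dimension $=\dim(V_0)=\dim(V)$, and sits inside a $\dim(V)$-dimensional variety, so $\overline{V'gV}$ must \emph{equal} the closure of a union of such translates, all forced into dimension $\dim(V)$. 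Varying $v'$ over $V'_0$ and $g$ over $G$, one gets that $\bigcup_{v'\in V'_0,\,g\in G} v' g V_0$ — which is all of $G\cdot V_0$ once $g$ ranges over $G$ — still has dimension $\dim(V)<\dim(G)$, provided $G\cdot V_0\subsetneq G$. But $G\cdot V_0 = G$ as soon as $V_0$ is nonempty (it is a union of full $G$-orbits... no). Let me instead argue via the subgroup $\Stab:=\{g\in G: V_0 g = V_0\}$ and $\{g: gV_0=V_0\}$: if $V'gV$ never grows, then translation on the left by $V'_0$ and on the right by $V_0$ never escapes a fixed $\dim(V)$-dimensional variety, which forces a positive-dimensional subgroup of $G$ to stabilize $V_0$ (as a set) on one side; taking its normal core, or intersecting the left and right stabilizers appropriately, produces a proper normal algebraic subgroup of positive dimension, contradicting almost-simplicity (this is exactly the ``algebra of a simple Lie-type group is simple'' remark in the parenthetical). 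That contradiction establishes that some $g$ works, and the locus $W$ of bad $g$ is proper.

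The quantitative ``moreover'' — that the number and degrees of components of $W$ are bounded in terms of $n$ and the degree/component data of $V'$, $V$ — I would handle by tracking everything through effective versions of the two black boxes: the Bézout bound \eqref{eq:dur} controls degrees under intersection, and the constructibility statement of Chevalley admits an effective form (the image $f_g(V'_0\times V_0)$ is cut out, after bounded elimination, by polynomials of bounded degree, uniformly in $g$ since $g$ is a parameter). Concretely, the condition ``$\dim\overline{V'gV}\le\dim(V)$'' is, via elimination theory applied to the incidence variety $\{(v',v,g,w): w=v'gv\}\subseteq V'\times V\times G\times G$, equivalent to the vanishing of a bounded-degree system in the coordinates of $g$, whose degrees depend only on $n$ and on $\deg(V'),\deg(V)$ and their numbers of components; the zero set of that system is $W$, and Bézout bounds its degree. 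The main obstacle, I expect, is making the almost-simplicity argument fully rigorous over an arbitrary (possibly positive-characteristic, non-algebraically-closed) field $K$ without invoking Lie-algebra structure theory: one must argue purely group-theoretically that the set of $g$ with $\dim\overline{V'gV}\le\dim V$ being everything forces an honest positive-dimensional proper normal subgroup, rather than merely a stabilizer that might be non-normal or of the wrong dimension. Everything else is bookkeeping with the two cited theorems.
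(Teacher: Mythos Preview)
Your overall architecture matches the paper's: assume the bad locus $W=\{g:\dim\overline{V'gV}\le\dim V\}$ is all of $G$, produce a proper positive-dimensional normal algebraic subgroup, and contradict almost-simplicity via the normal core of a stabilizer. That part is right.

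What you are missing is the single observation that makes both the variety structure of $W$ \emph{and} the normal-core step immediate, and that fills precisely the ``main obstacle'' you flag. After reducing to $V,V'$ irreducible with $e\in V'$, the paper shows that $\dim\overline{V'gV}\le\dim V$ forces the equality $V'gV=gV$: each $v'gV$ is an irreducible $\dim(V)$-dimensional subvariety of $\overline{V'gV}$, hence coincides with one of its finitely many top-dimensional components $W_i$; the set $\{v':v'gV=W_i\}$ is closed (it is $\bigcap_{v\in V} W_i v^{-1}g^{-1}$), so by irreducibility of $V'$ one $W_i$ absorbs all of $V'$, and since $e\in V'$ that component is $gV$ itself. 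Consequently
\[
W \;=\; \{g: V'gV=gV\} \;=\; \bigcap_{v'\in V',\;v\in V}\phi_{v',v}^{-1}(V),
\qquad \phi_{v',v}(g)=g^{-1}v'gv,
\]
an explicit intersection of pullbacks of $V$ by degree-$2$ maps; B\'ezout alone bounds its components and degrees, with no need for effective elimination or semicontinuity of image dimension. And if $W=G$, then $g^{-1}V'g\subset\Stab(V)$ for every $g$, so $V'\subset\bigcap_{g} g\,\Stab(V)\,g^{-1}$, which is a normal algebraic subgroup of $G$ of dimension $\ge\dim V'>0$ and contained in $\Stab(V)\subsetneq G$: contradiction. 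Your route through effective Chevalley and semicontinuity is not wrong in principle, but it is the hard way around, and without the equality $V'gV=gV$ you have no clean reason why ``never grows'' yields a subgroup (rather than merely a closed subset) containing a conjugacy-invariant copy of $V'$.
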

\begin{proof}

  Podemos suponer sin p\'erdida de generalidad que $V$ y $V'$ son
  irreducibles, y que $e\in V'(\overline{K})$.

  Sea $g\in G(\overline{K})$. Supongamos que $\overline{V' g V}$ tiene
  dimensi\'on $\leq \dim(V)$. Para todo $v'\in V'(\overline{K})$,
  $v' g V$ es una variedad de dimensi\'on $\dim(V)$ -- uno del n\'umero
  finito de componentes $W_i$ de $\overline{V' g V}$ de dimensi\'on
  $\dim(V)$. Para cada tal $W_i$, los puntos $v'$ tales que
  $v' g V = W_i$ forman una variedad $V_i$, al ser la intersecci\'on
  de variedades \[\bigcap_{v\in V(\overline{K})} W_i v^{-1} g^{-1}.\]
  As\'i, $V'$ es la uni\'on de un n\'umero finito de variedades $V_i$;
  como $V'$ es irreducible, esto implica que $V'=V_i$ para alg\'un $V$.
  En particular, $g V = e\cdot g V = W_i$. Por lo tanto, $V' g V = g V$.

  Ahora bien, los elementos $g\in G(\overline{K})$ tales que
  $V' g V = g V$ son una intersecci\'on
  \[\mathop{\bigcap_{v\in V}}_{v'\in V'} \phi_{v',v}^{-1}(V)\]
  de variedades $\phi_{v',v}^{-1}(V)$, donde $\phi_{v',v}(g) = g^{-1} v' g v$.
  Por lo tanto, tales elementos constituyen una
  subvariedad $W$ de $G$; m\'as a\'un, gracias a B\'ezout (\ref{eq:dur}),
  su n\'umero de componentes
  as\'i como el grado de estos est\'an acotados por
  una constante que depende s\'olo
  de $n$ y del n\'umero y grados de los componentes de $V'$ y $V$.

  Falta s\'olo mostrar que $W\ne G$. Supongamos que $W=G$. Entonces
  $V' g V = g V$ para todo $g\in G(\overline{K})$. Ahora bien,
  el estabilizador $\{g\in G(\overline{K}): g V = V\}$ de $V$
  no s\'olo es un grupo, sino que es (el conjunto de puntos de) una
  variedad (nuevamente: para ver esto,
  expr\'eselo como una intersecci\'on de variedades).
  Llamemos a tal variedad $\Stab(V)$. Tenemos que $g^{-1} V' g \subset
  \Stab(V)$ para todo $g\in G(\overline{K})$, y por lo tanto
  \[V' \subset \bigcap_{g\in G(\overline{K})} g \Stab(V) g^{-1}.\]
Esto muestra que la variedad $\cap_{g\in G(\overline{K})} g \Stab(V) g^{-1}$
  es de dimensi\'on $\geq \dim(V')>0$. Al mismo tiempo,
  dicha variedad es un subgrupo algebraico normal de $G$, contenido en
  $\Stab(V)$. Como $\Stab(V)\subsetneq G$, tenemos
  un subgrupo algebraico normal de $G$, de dimensi\'on positiva y
  estrictamente contenido en $G$. En otras palabras, $G$ no es un grupo
  algebraico casi simple.
  Contradicci\'on.
\end{proof}

Sean $G$, $V$ y $V'$ tales que satisfagan las hip\'otesis del
Lema \ref{lem:tush}, y sea $g\in G(\overline{K})$ como en la conclusi\'on
del Lema, i.e., fuera de la subvariedad $W\subsetneq G$.
Asumamos que $\dim(V') = 1$, y consideremos el morfismo
$\phi:V'\times V\to \overline{V'gV}$ dado por
\[\phi(v',v) = v' g v.\]
La dimension de la imagen de un morfismo no es mayor que la dimension
de su dominio (ejercicio), as\'i que
\[\dim(\phi(V' \times V))\leq \dim(V'\times V) = \dim(V') + \dim(V) = \dim(V)+1.\]
Al mismo tiempo, por el Lema \ref{lem:tush}, $\dim(\phi(V',V))>\dim(V)$.
Por lo tanto, \[\phi(V' \times V) = \dim(V)+1 = \dim(V'\times V).\]

Si un morfismo $\phi:X\to X'$ es tal que $\overline{\phi(X)} = X'$, decimos
que $\phi$ es {\em dominante}. (Por ejemplo, el morfismo $\phi$
que acabamos de considerar es dominante.) Aceptemos el hecho que, si
$\phi$ es dominante y $\dim(X')=\dim(X)$, entonces hay una subvariedad
$Y\subsetneq X$ tal que, para todo $x\in X(\overline{K})$ que no yazca
en $Y(\overline{K})$, la variedad $\phi^{-1}(\phi(\{x\}))$
tiene dimensi\'on $0$. (\'Esta es una consecuencia inmediata del
\cite[\S 1.8, Thm. 3]{MR1748380}.) M\'as a\'un, el n\'umero de componentes
de $Y$ y su grado est\'an acotados en t\'erminos del grado de $\phi$
y del n\'umero, dimensi\'on y grado de los componentes de $X$ y $X'$.

(Para que lo que acabamos de llamar una consecuencia inmediata de algo
en otra parte se vuelva intuitivamente claro, considere el caso
$K = \mathbb{R}$. Entonces
$Y$ es la subvariedad de $X$ definida por la condici\'on ``la determinante
$D\phi(x)$ de $\phi$ en el punto $x$ tiene determinante $0$''.
Hay varias maneras de ver que $Y$ es una subvariedad $\subsetneq X$
para $K$ arbitrario: como dijimos,
es posible definir derivadas sobre cuerpos arbitrarios, o, alternativamente,
proceder como en \cite[\S 1.8, Thm. 3]{MR1748380}.)

Aplicando esto a la aplicaci\'on $\phi$ que ten\'iamos, obtenemos
que hay una subvariedad $Y\subsetneq V'\times V$ tal que, para todo
$x\in (V'\times V)(\overline{K})$ que no yace en $Y$, $\phi^{-1}(\phi(x))$
tiene dimensi\'on $0$.

Dado \'esto, podemos probar una generalizaci\'on del Lema \ref{lem:rokto}.
Se trata realmente de (\ref{eq:utur}) para toda variedad de dimensi\'on
$1$.
\begin{prop}\label{prop:cheyen}
  Sea $K$ un cuerpo y $G\subset \SL_n$ un grupo algebraico casi simple
  tal que $|G(K)|\geq c |K|^{\dim(G)}$, $c>0$.
  Sea $Z\subset G$ una variedad de dimensi\'on $1$.
  Sea $A\subset G(K)$ un conjunto de generadores de $G(K)$.
  Entonces
    \begin{equation}\label{eq:clada}
    |A\cap Z(K)|\ll |(A \cup A^{-1}\cup \{e\})^k|^{1/\dim(G)},\end{equation}
    donde $k$ y la constante impl\'icita dependen solamente de $n$, de $c$ y del
    n\'umero y grado de los componentes irreducibles de $G$ y $Z$.
\end{prop}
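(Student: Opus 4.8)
The plan is to follow the strategy outlined just before the statement, generalizing the proof of Lemma~\ref{lem:rokto} from the torus $T$ to an arbitrary one-dimensional variety $Z$. Set $r = \dim(G)$. First I would reduce to a convenient situation: replacing $A$ by $(A\cup A^{-1}\cup\{e\})^c$ for a suitable constant $c$ (using (\ref{eq:mony})), I may assume $A = A^{-1}$, $e\in A$, and that $|A|$ exceeds any fixed constant; moreover I may assume $|K|$ is larger than a constant, since otherwise (\ref{eq:clada}) is trivial. I would also replace $Z$ by one of its irreducible components of dimension $1$ (a component of dimension $0$ contributes $O(1)$), so $Z$ is irreducible of dimension $1$.

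The core construction is the morphism
\[
\phi\colon \underbrace{Z\times Z\times\dotsb\times Z}_{r\ \text{copies}}\longrightarrow G,\qquad
\phi(v_1,\dotsc,v_r) = v_1 g_1 v_2 g_2 \dotsb v_{r-1} g_{r-1} v_r,
\]
where $g_1,\dotsc,g_{r-1}$ are elements of $A^\ell$ ($\ell$ a constant) still to be chosen. I want $\phi$ to be ``nearly injective'', i.e.\ to have $0$-dimensional generic fibers; granting that, the inclusion $\phi(B,\dotsc,B)\subset B A^\ell B A^\ell\dotsb B = A^{(r-1)\ell + r}$ with $B = A\cap Z(K)$, together with a fiber-dimension/degree bound turning $0$-dimensional fibers into fibers of bounded cardinality, yields $|B|^r \ll |A^{(r-1)\ell+r}|$, which is (\ref{eq:clada}). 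The mechanism for proving generic finiteness of fibers is an iterated application of Lemma~\ref{lem:tush}: starting from $V_1 = Z$ (dimension $1$) and applying the lemma with $V' = Z$, $V = V_{j}$, I get that for $g_j$ outside a proper subvariety $W_j\subsetneq G$ some component of $\overline{Z g_j V_j}$ has dimension $>\dim(V_j)$; taking $V_{j+1}$ to be such a component, after $r-1$ steps I reach a variety of dimension $\ge r = \dim(G)$, hence equal to $G$, so $\phi$ is dominant with $\dim(\text{source}) = \dim(G) = \dim(\text{target})$. Then the dominant-morphism fact quoted in the excerpt (consequence of \cite[\S 1.8, Thm.~3]{MR1748380}) gives a proper subvariety $Y$ of the source off which all fibers are $0$-dimensional, with the number and degrees of its components controlled.

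It remains to produce the elements $g_1,\dotsc,g_{r-1}\in A^\ell$ with the required genericity. Lemma~\ref{lem:tush} tells me the bad sets $W_j\subsetneq G$ have bounded degree and number of components, depending only on $n$, $c$, and the combinatorial data of $G$ and $Z$; so $W = W_1\cup\dotsb\cup W_{r-1}$ is a proper subvariety of $G$ of bounded complexity. Since $A$ generates $G(K)$ and $G$ is almost simple with $|G(K)|\ge c|K|^{\dim G}$ (so $G\cdot e$ is not contained in $W$), the escape-from-subvarieties result (Prop.~\ref{prop:huru}, applied with $x = e$) produces many elements of $(A\cup A^{-1}\cup\{e\})^k = A^k$ lying outside $W(\overline K)$, in particular at least one; choosing $\ell = k$ gives the desired $g_j$'s simultaneously (one can intersect the escape conditions for the finitely many $W_j$, or escape from $W$ directly). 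One final technical point is that after building $\phi$ I have control of fibers only over $\overline K$-points away from $Y$, whereas I need the counting bound over $K$; this is handled by noting that $B\times\dotsb\times B \subset (Z\times\dotsb\times Z)(K)$ and bounding separately the contribution of those tuples that happen to land in $Y(K)$ --- but here lies the main subtlety I expect: one must ensure the tuples in $B^r\cap Y$ are few, which requires either a fibered/inductive argument on $Z$ (as in the torus case, where the analogous ``degenerate'' set was cut out by finitely many equations and handled by hand) or an appeal to a dimensional estimate one level down; getting this bookkeeping clean, while keeping all constants depending only on the allowed quantities, is the part that needs care.
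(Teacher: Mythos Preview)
Your proposal follows essentially the same route as the paper's outlined proof (given there as a guided exercise): build $\phi$ as in (\ref{eq:kukuku}), use Lemma~\ref{lem:tush} repeatedly to force $\phi$ to be dominant with source and target of equal dimension, invoke the fiber-dimension fact to get a proper degeneracy locus $Y\subsetneq Z^r$, and then bound $|B^r\cap Y(K)|$ separately. You also correctly flag this last point as the delicate one; the paper handles it by projecting $Z^r\to Z^{r-1}$, splitting on whether the restriction of that projection to each component of $Y$ is dominant, and iterating, which yields $|B^r\cap Y(K)|\ll |B|^{r-1}$.

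There is, however, a real slip in your escape step. You propose to set $W=W_1\cup\dotsb\cup W_{r-1}$ and escape from it in one shot, choosing all $g_j$ simultaneously. But the exceptional variety $W_j$ supplied by Lemma~\ref{lem:tush} is the bad locus for the pair $(V',V)=(Z,V_j)$, and $V_j$ is (a component of) $\overline{Z\,g_{j-1}\,V_{j-1}}$; thus $W_j$ is not defined until $g_1,\dotsc,g_{j-1}$ have already been chosen. You cannot form $W_2,\dotsc,W_{r-1}$ in advance, so the union $W$ does not exist at the moment you want to escape from it. The fix is exactly what the paper does: iterate. Apply Prop.~\ref{prop:huru} to $W_1$ to produce $g_1\in A^{\ell_1}$ (using $|G(K)|\ge c|K|^{\dim G}$ together with the elementary bound $|W_1(K)|\ll |K|^{\dim G-1}$ to see that $G(K)\not\subset W_1(K)$); this fixes $V_2$ and hence $W_2$; escape from $W_2$ to get $g_2\in A^{\ell_2}$; continue for $r-1$ rounds. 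At each step Lemma~\ref{lem:tush} bounds the complexity of $W_j$ in terms of that of $V_j$, which in turn is controlled via (\ref{eq:dur}), so the $\ell_j$ stay bounded by constants depending only on the allowed data, and $\ell=\ell_1+\dotsb+\ell_{r-1}$ works. With this correction your argument matches the paper's and goes through.
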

Obviamente, $G = \SL_n$ es una elecci\'on v\'alida, pues es casi simple
y $|\SL_n(K)|\gg |K|^{n^2-1} = |K|^{\dim(G)}$.
\begin{prob}
  Pruebe la Proposici\'on \ref{prop:cheyen}. He aqu\'i un esbozo:
  \begin{enumerate}
  \item Muestre el siguiente lema b\'asico:
    si $W\subset \mathbb{A}^N$ es una variedad de dimensi\'on $d$, entonces
    el n\'umero de puntos $(x_1,\dotsc,x_N)\in \mathbb{A}^n(K)$
    que yacen en $W$
    es $\ll |K|^d$, donde la constante impl\'icita depende s\'olo de $N$
    y del n\'umero y grado de componentes irreducibles de $W$. (Sugerencia:
    para $d=0$, \'esto est\'a claro. Para $d>0$, considere la proyecci\'on
    $\pi:\mathbb{A}^N\to\mathbb{A}^{N-1}$ a las primeras $N-1$ coordenadas,
    o m\'as bien dicho la restricci\'on $\pi|W$ de $\pi$ a $W$. Reduzca al
    caso de dimensi\'on $d-1$ -- la manera de hacerlo depende de si
    $\pi|W$ es o no es dominante.)
  \item Utilizando el escape de subvariedades (Prop.~\ref{prop:huru})
    y el Lema \ref{lem:tush},
    muestre que, dadas las condiciones del Lema \ref{lem:tush},
    existe un elemento $g\in (A\cup A^{-1} \cup \{e\})^\ell$,
    $\ell$ una constante (dependiendo de esto y aquello),
    tal que  alg\'un componente de la clausura de Zariski $\overline{V' g V}$
    tiene dimensi\'on $>\dim(V)$. Esto es rutina, pero no olvide mostrar
    que hay algun punto de $G(K)$ fuera de $W$ (usando el lema
    b\'asico que acaba de probar).
  \item Aplicando esto (y las consecuencias discutidas inmediatamente
    despues del Lema \ref{lem:tush}) de manera iterada, muestre que
    existen $g_1,\dotsc,g_{r-1} \in (A\cup A^{-1} \cup \{e\})^{\ell'}$
    y una subvariedad $Y\subsetneq Z\times Z \times \dotsc \times Z$
    ($r=\dim(G)$ veces) tales que, para
    todo $x\in (Z\times Z \times \dotsc \times Z)(\overline{K})$
    que no yace en $Y$, $\phi^{-1}(\phi(x))$ es de dimensi\'on $0$,
    donde $\phi$ es como en (\ref{eq:kukuku}).
  \item Usando nuevamente un argumento que distingue si una
    projecci\'on (esta vez de $Z\times \dotsc \times Z$ ($r$ veces)
    a $Z\times \dotsc \times Z$ ($r-1$ veces)) es dominante,
    e iterando, muestre que hay a lo m\'as $O(|A\cap Z(K)|^{r-1})$
    elementos de $(A\cup Z(K)) \times \dotsc \times (A\cup Z(K))$
    ($r$ veces) en $Y$.
  \item Concluya que la proposici\'on Prop.~\ref{prop:cheyen} es
    cierta.
        \end{enumerate}
\end{prob}

En general, se puede probar (\ref{eq:utur}) para $\dim(V)$ arbitrario
siguiendo argumentos muy
similares, mezclados con una inducci\'on sobre la dimensi\'on de la variedad
$V$ en (\ref{eq:utur}). Ilustraremos el proceso b\'asico haciendo
las cosas en detalle para $G=\SL_2$ y para
el tipo de variedad $V$ que realmente necesitamos.

Se trata de la variedad 
$V_t$ definida por
\begin{equation}\label{eq:schlingue}
  \det(g) = 1, \tr(g) = t
    \end{equation}
  para $t\ne \pm 2$. Tales variedades nos interesan por el hecho que,
  para cualquier $g\in \SL_2(K)$ regular semisimple (lo cual en $\SL_2$
  quiere decir: con dos valores propios distintos), la clase de conjugacion
  $\Cl(g)$ est\'a contenida en $V_{\tr(g)}$.


  \begin{prop}\label{prop:juru}
    Sea $K$ un cuerpo; sea $A\subset \SL_2(K)$ un conjunto de generadores de
    $\SL_2(K)$.
    Sea $V_t$ dada por (\ref{eq:schlingue}). Entonces, para todo $t\in K$ aparte de $\pm 2$,
    \begin{equation}\label{eq:terka}
      |A\cap V_t(K)|\ll |(A\cup A^{-1} \cup \{e\})^k|^{\frac{2}{3}},
      \end{equation}
   donde $k$ y la constante impl\'icita son constantes absolutas. 
  \end{prop}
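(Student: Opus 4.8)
The plan is to derive (\ref{eq:terka}) from the curve case already in hand, Proposition~\ref{prop:cheyen}: since $\dim V_t=2$ and $\dim\SL_2=3$, (\ref{eq:terka}) is exactly the instance $\dim V=2$ of the general estimate (\ref{eq:utur}), and the announced induction on $\dim V$ reduces it to the one-dimensional case. Write $B=A\cup A^{-1}\cup\{e\}$. I may assume throughout that $|K|$ and $|A|$ exceed any fixed absolute constant, since otherwise $|A\cap V_t(K)|\le\min(|A|,|V_t(K)|)$ is bounded and there is nothing to prove.

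First I set up a product map and count its fibers. Consider the morphism $\phi\colon V_t\times V_t\to\SL_2$ given by $\phi(v_1,v_2)=v_1v_2$; no escape argument is needed here because the trivial element already does the job. If $v_1,v_2\in A\cap V_t(K)$ then $g:=v_1v_2\in A^2$, and for fixed $g$ the factor $v_2=v_1^{-1}g$ is determined by $v_1$. Using Cayley--Hamilton ($v_1^{-1}=\tr(v_1)I-v_1$ for $v_1\in\SL_2$) together with $\tr(v_1)=t$, the requirement $\tr(v_2)=t$ becomes the \emph{linear} condition $\tr(v_1g)=t(\tr(g)-1)$. Hence, for each $g$,
\[
|\phi^{-1}(g)\cap(A\cap V_t(K))^2|\ \le\ |A\cap Z_g(K)|,\qquad Z_g:=V_t\cap\{v:\tr(vg)=t(\tr(g)-1)\},
\]
so $Z_g$ is the intersection of $V_t$ with an affine hyperplane of $M_2\cong\mathbb{A}^4$. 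In particular $Z_g\subset\SL_2$ is a variety whose number of irreducible components and whose degree are bounded by an absolute constant (by B\'ezout (\ref{eq:dur}), using $\deg V_t\le 2$), uniformly in $t$ and $g$.

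Next I bound $\dim Z_g$ and then sum. Since $V_t$ is a two-dimensional irreducible variety that is not an affine plane, its affine span in $M_2$ is precisely the hyperplane $\{\tr=t\}$; hence $V_t\not\subset H$ for every affine hyperplane $H\ne\{\tr=t\}$, so $\dim Z_g\le 1$ unless the hyperplane $\{v:\tr(vg)=t(\tr g-1)\}$ equals $\{\tr=t\}$, which a short computation shows forces $g=\pm I$ (and only $g=I$ when $t\ne 0$). Thus $\dim Z_g\le 1$ for all $g\in A^2$ outside an absolutely bounded exceptional set $\mathcal{E}$, while for $g\in\mathcal{E}$ one has the crude bound $|\phi^{-1}(g)\cap(A\cap V_t(K))^2|\le|A\cap V_t(K)|$. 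For $g\notin\mathcal{E}$, $Z_g$ is a variety of dimension $\le1$ inside the almost simple group $G=\SL_2$ (which satisfies $|G(K)|\ge|K|^3/2$), $A$ generates $G(K)$, and the components and degrees of $Z_g$ are bounded absolutely, so Proposition~\ref{prop:cheyen} (together with the trivial estimate for zero-dimensional varieties) gives $|A\cap Z_g(K)|\ll|B^{k_1}|^{1/3}$ with $k_1$ and the implied constant \emph{absolute}, hence uniform in $g$ and $t$. Summing over the at most $|A^2|$ values of $g$ and isolating the exceptional terms,
\[
|A\cap V_t(K)|^2=\sum_{g\in A^2}|\phi^{-1}(g)\cap(A\cap V_t(K))^2|\ \ll\ |A^2|\,|B^{k_1}|^{1/3}+|A\cap V_t(K)|\ \ll\ |B^{k}|^{4/3}+|A\cap V_t(K)|,
\]
for $k=\max(k_1,2)$, using $|A^2|\le|B^2|\le|B^k|$. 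Once $|A\cap V_t(K)|$ exceeds a fixed constant the linear term is absorbed, yielding $|A\cap V_t(K)|\ll|B^{k}|^{2/3}$, which is (\ref{eq:terka}).

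The step to watch is the middle one: one must verify that the fibers $Z_g$ are genuinely at most one-dimensional with absolutely bounded degree for all but boundedly many $g$ — i.e.\ that $\phi$ is ``one-dimensional on generic fibers'' — and, crucially, that the constant and the parameter $k_1$ produced by Proposition~\ref{prop:cheyen} depend only on the (bounded) combinatorial data of $\SL_2$ and of $Z_g$, and not on $g$ or $t$, so that they survive the summation over $g\in A^2$. Everything else is routine bookkeeping.
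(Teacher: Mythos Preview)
Your proof is correct and follows essentially the same route as the paper's: map $V_t\times V_t$ into $G$ by a product, observe that the fibers are cut out from $V_t$ by an additional hyperplane and hence are one-dimensional for all but the exceptional $g\in\{\pm I\}$, then invoke Proposition~\ref{prop:cheyen} on those fibers. The only cosmetic differences are that the paper uses $\phi(y_1,y_2)=y_1y_2^{-1}$ rather than $v_1v_2$, and bounds by a maximum over fibers rather than summing; your use of Cayley--Hamilton to make the fiber condition visibly linear is a nice touch that the paper leaves as ``verificar''.
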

 Claro est\'a, $\dim(\SL_2) = 3$ y $\dim(V_t) = 2$, as\'i que este es un
  caso particular de (\ref{eq:utur}).

  \begin{proof}
    Consideremos la aplicaci\'on $\phi:V_t(K)\times V_t(K) \to
    \SL_2(K) $ definida por
    \[\phi(y_1,y_2) = y_1 y_2^{-1}.\]
    Est\'a claro que \[\phi(A\cap V_t(K),A\cap V_t(K))\subset A^2.\]
    As\'i, si $\phi$ fuera inyectiva, tendr\'iamos inmediatamente que
    $|A\cap V_t(K)|^2\leq |A^2|$. Ahora bien, $\phi$ no es
    inyectiva. La preimagen de $\{h\}$, $h\in \SL_2(K)$, es
    \[\phi^{-1}(\{h\}) = \{(w,h^{-1} w): \tr(w) = t, tr(h^{-1} w) = t\}.\]

    Debemos preguntarnos, entonces, cu\'antos elementos de $A$ yacen
    en la subvariedad $Z_{t,h}$ de $G$ definida por
    \[Z_{t,h} =  \{(w,h w): \tr(w) = t, tr(h^{-1} w) = t\}.\]
    Para $h\ne \pm e$, $\dim(Z_{t,h}) = 1$ (verificar), y el n\'umero
    y grado de componentes de $Z_{t,h}$ esta acotado por una constante
    absoluta. As\'i, aplicando la Proposici\'on \ref{prop:cheyen},
    obtenemos que, para $h\ne \pm e$,
    \[|A\cap Z_{t,h}(K)|\ll |A^{k'}|^{1/3},\]
    donde $k'$ y la constante impl\'icita son absolutas.

    Ahora bien, para cada $y_1\in V_t(K)$, hay por lo menos $|V_t(K)|-2$
    elementos $y_2\in V_t(K)$ tales que $y_1 y_2^{-1} \ne \pm e$.
    Conclu\'imos que
    \[|A\cap V(K)| (|A\cap V(K)|-2) \leq |A^2|\cdot \max_{g\ne \pm e}
    |A\cap Z_{t,h}(K)| \ll |A^2| |A^{k'}|^{1/3}.\]
    Podemos asumir que $|A\cap V(K)|\geq 3$, pues de lo contrario
    la conclusi\'on deseada es trivial. Obtenemos, entonces, que
    \[|A\cap V(K)|\ll |A^k|^{2/3}\]
    para $k = \max(2,k')$, como quer\'iamos.
      \end{proof}
  

  Pasemos a la consecuencia que nos interesa.
  \begin{coro}\label{cor:hutz}
    Sea $K$ un cuerpo y $G = \SL_2$. Sea $A$ un conjunto de generadores de
    $G(K)$; sea $g\in A^\ell$ ($\ell\geq 1$) regular semisimple. Entonces
    \begin{equation}\label{eq:hop1}
      |A^{-1} A\cap C(g)|\gg \frac{|A|}{|(A\cup A^{-1}\cup \{e\})^{k \ell}|^{2/3}},\end{equation}
    donde $k$ y  la constante impl\'icita son absolutas.

    En particular, si $|A^3|\leq |A|^{1+\delta}$, entonces
    \begin{equation}\label{eq:hop2}
      |A^{-1} A \cap C(g)|\gg_\ell |A|^{1/3-O(\delta \ell)}.\end{equation}
  \end{coro}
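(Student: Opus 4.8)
El plan es combinar la estimaci\'on de tipo \'orbita--estabilizador del Lema~\ref{lem:lawve} con la cota dimensional de la Proposici\'on~\ref{prop:juru}. Primero aplicar\'ia el Lema~\ref{lem:lawve} con $l=\ell$: como $g\in A^\ell$,
\[|A^{-1}A\cap C(g)|\geq \frac{|A|}{|A^{\ell+1}A^{-1}\cap \Cl(g)|},\]
de modo que todo se reduce a acotar superiormente $|A^{\ell+1}A^{-1}\cap \Cl(g)|$.

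Como $g$ es regular semisimple, sus dos valores propios $\lambda,\lambda^{-1}$ son distintos, as\'i que $t:=\tr(g)=\lambda+\lambda^{-1}\ne\pm2$, y (como se record\'o justo antes de la Proposici\'on~\ref{prop:juru}) $\Cl(g)\subseteq V_t(K)$. Por lo tanto $|A^{\ell+1}A^{-1}\cap \Cl(g)|\leq |A^{\ell+1}A^{-1}\cap V_t(K)|$. Pondr\'ia entonces $B=(A\cup A^{-1}\cup\{e\})^{\ell+2}$, de suerte que $A^{\ell+1}A^{-1}\subseteq B$; como $A\subseteq B$ y $A$ genera $\SL_2(K)$, tambi\'en $B$ lo genera, y adem\'as $B=B^{-1}$ y $e\in B$. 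Aplicando la Proposici\'on~\ref{prop:juru} al conjunto de generadores $B$ y a la variedad $V_t$ (con $t\ne\pm2$) se obtiene $|B\cap V_t(K)|\ll |B^{k'}|^{2/3}$ para una constante absoluta $k'$. Como $B^{k'}=(A\cup A^{-1}\cup\{e\})^{k'(\ell+2)}$ y $\ell+2\leq 3\ell$ para $\ell\geq1$, esto queda absorbido en $|(A\cup A^{-1}\cup\{e\})^{3k'\ell}|^{2/3}$; tomando $k=3k'$ se obtiene (\ref{eq:hop1}).

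Para (\ref{eq:hop2}), suponiendo $|A^3|\leq|A|^{1+\delta}$, la desigualdad (\ref{eq:marmundo}) da
\[|(A\cup A^{-1}\cup\{e\})^{k\ell}|\leq 3^{k\ell-2}\left(\frac{|A^3|}{|A|}\right)^{3(k\ell-2)}|A|\leq 3^{k\ell}\,|A|^{1+3\delta(k\ell-2)},\]
con lo cual el denominador de (\ref{eq:hop1}) es $\ll_\ell |A|^{2/3+O(\delta\ell)}$, y (\ref{eq:hop2}) se sigue de inmediato.

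Todo esto es esencialmente rutina; los \'unicos puntos que piden algo de cuidado son (i) verificar que $\Cl(g)\subseteq V_{\tr(g)}(K)$ con $\tr(g)\ne\pm2$, lo cual es inmediato a partir de la definici\'on de elemento regular semisimple; y (ii) la contabilidad que mantiene $k$ absoluta --- hay que pasar por un conjunto de generadores sim\'etrico auxiliar $B$ que contenga a $A^{\ell+1}A^{-1}$, y el punto es simplemente que $\ell+2$ es a lo m\'as un m\'ultiplo absoluto de $\ell$. No espero un obst\'aculo genuino: el contenido real ya est\'a empacado en el Lema~\ref{lem:lawve}, en la inclusi\'on $\Cl(g)\subseteq V_{\tr(g)}$ y en la Proposici\'on~\ref{prop:juru}.
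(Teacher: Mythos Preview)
Tu propuesta es correcta y sigue esencialmente el mismo camino que la demostraci\'on del art\'iculo: combinar el Lema~\ref{lem:lawve} con la Proposici\'on~\ref{prop:juru} v\'ia la inclusi\'on $\Cl(g)\subset V_{\tr(g)}$, y luego usar (\ref{eq:marmundo}) para (\ref{eq:hop2}). Tu versi\'on hace expl\'icita la contabilidad (el conjunto auxiliar $B$ y la absorci\'on $\ell+2\leq 3\ell$) que el art\'iculo deja impl\'icita, pero no hay diferencia de m\'etodo.
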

  \begin{proof}
    La Proposici\'on \ref{prop:juru} y el Lema \ref{lem:lawve}
    implican (\ref{eq:hop1}) inmediatamente, puesto que
$\Cl(g)\subset V_{\tr(g)}$, donde $V_t$ se define como en
(\ref{eq:schlingue}). De manera tambi\'en muy sencilla,
    la conclusi\'on (\ref{eq:hop2}) se deduce de
    (\ref{eq:hop1}) a trav\'es de (\ref{eq:marmundo}).
    \end{proof}

  Veamos ahora dos problemas cuyos resultados no utilizaremos;
  son esenciales, empero, si se quiere trabajar en $\SL_n$ para $n$
  arbitrario. El primer problema es relativamente ambicioso, pero
  ya hemos visto todos los elementos esenciales para su soluci\'on.
  En esencia, s\'olo se trata de saber organizar la recursi\'on.

  \begin{prob}
    Generalice la Proposicion \ref{prop:cheyen} a $Z$ de dimensi\'on
    arbitraria.
  \end{prob}
  
  En general, un elemento $g\in \SL_n(K)$ es {\em regular semisimple}
  si tiene $n$ valores propios distintos. Claro est\'a, todo elemento
  de $C(g)$ tiene los mismos vectores propios que $g$. Cuando $G=\SL_n$, como
  para $\SL_2$, los elementos de $C(g)$ son los puntos $T(K)$ de un
  subgrupo algebraico abeliano $T$ de $G$, llamado un toro m\'aximo.
  Tenemos que $\dim(T) = n-1$ y $\dim(\overline{\Cl(g)})=\dim(G) - \dim(T)$.

  \begin{prob}
    Generalice \ref{cor:hutz} a $G=\SL_n$, para $g$ semisimple.
    En vez de (\ref{eq:hop2}),
    la conclusi\'on reza como sigue:
        \begin{equation}\label{eq:hosop2}
      |A^{-1} A\cap C(g)|\gg |A|^{\frac{\dim(T)}{\dim(G)}-O(\delta)},\end{equation}
      donde las constantes impl\'icitas dependen solo de $n$.
  \end{prob}

  Terminemos por una breve nota con un lado anecd\'otico. Una versi\'on
  del Corolario \ref{cor:hutz} fue probada en \cite{Hel08}, donde jug\'o
  un rol central. Luego fue generalizada a $\SL_n$ en \cite{HeSL3}, dando,
  en esencia, (\ref{eq:hosop2}).

  Empero, estas versiones tenian una debilidad: daban (\ref{eq:hop2})
  y (\ref{eq:hosop2}) para la mayor\'ia de los
  $g\in A^\ell$, y no para {\em todo} $g\in A^\ell$. Esto hac\'ia que el resto del argumento -- la parte que estamos
  por ver -- fuera m\'as complicado y dif\'icil de generalizar que lo
  es hoy en d\'ia.

  La moraleja es, por supuesto, que no hay que asumir  que las t\'ecnicas
  y argumentos que a uno le son familiares son \'optimos -- y que para
  simplificar una prueba vale la pena tratar de
  probar resultados intermedios m\'as fuertes.
  
  \chapter{El crecimiento en $\SL_2(K)$}
  \section{El caso de los subconjuntos grandes}\label{sec:subgra}
  Veamos primero que pasa con $A\cdot A\cdot A$ cuando
  $A\subset \SL_2(\mathbb{F}_q)$ es grande con respecto a $G=\SL_2(\mathbb{F}_q)$. En verdad no es dif\'icil
  mostrar que, si $|A| \geq |G|^{1-\delta}$, $\delta>0$ suficientemente
  peque\~no, entonces
  $(A \cup A^{-1} \cup \{e\})^k = G$, donde $k$ es una constante absoluta.
  Probaremos algo m\'as fuerte: $A^3=G$. La prueba se 
  debe a Nikolov y Pyber \cite{MR2800484}; est\'a basada sobre
  una idea cl\'asica, desarrollada en este contexto por Gowers \cite{MR2410393}.
  Nos dar\'a la oportunidad de revisitar el tema de los valores propios
  de la matriz de adyacencia
  $\mathscr{A}$ de $\Gamma(G,A)$. (Los comenzamos a discutir
   en \S \ref{sec:dfsq}.)

   Primero, recordemos que una {\em representaci\'on compleja} de un grupo
   $G$ es un homomorphismo $\phi:G\to \GL_d(\mathbb{C})$; decimos, naturalmente,
   que $d\geq 1$ es la {\em dimensi\'on} de la representaci\'on. Una
   representaci\'on $\phi$ es {\em trivial} si $\phi(g)=e$ para todo $g\in G$.

   El siguiente resultado se debe a Frobenius (1896), por lo menos para $q$
   primo. Se puede mostrar
   simplemente examinando una tabla de caracteres, como en
   \cite{MR1691549} (que da tambi\'en an\'alogos de esta proposici\'on para otros grupos de tipo Lie). Para $q$ primo,
   hay una prueba breve y elegante; v\'ease, e.g.,
   \cite[Lemma 1.3.3]{MR3309986}.
   \begin{prop}
     Sea $G = \SL_2(\mathbb{F}_q)$, $q = p^\alpha$.
     Entonces toda representaci\'on compleja no trivial de $G$ tiene
     dimensi\'on $\geq (q-1)/2$.
   \end{prop}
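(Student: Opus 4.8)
The plan is to prove the lower bound on the dimension of nontrivial complex representations of $\SL_2(\F_q)$ by exhibiting a large enough subgroup whose induced representations force the dimension up, or — more cleanly — by directly examining how the known abelian subgroups of $\SL_2(\F_q)$ must act. I would take the second route, which is essentially the classical argument and uses only elementary representation theory of finite groups plus the structure of $\SL_2(\F_q)$ already laid out in Chapter~1. The key players are the unipotent subgroup $U = \left\{\left(\begin{smallmatrix}1 & b\\0 & 1\end{smallmatrix}\right): b\in\F_q\right\}$, the diagonal torus (the split maximal torus), and the normalizer of a non-split torus; together with the fact that $\PSL_2(\F_q)$ is simple for $q>3$, these pin things down.

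First I would reduce to the case where $\phi:G\to\GL_d(\C)$ is faithful on $\PSL_2(\F_q)$: if $\phi$ is nontrivial then, since $Z(G)=\{I,-I\}$ is the only proper nontrivial normal subgroup, $\ker\phi\subseteq Z(G)$, so $\phi$ factors through either $G$ or $\PSL_2(\F_q)$ but in any case is injective modulo the center; thus no information is lost and I may assume $\phi$ injective on a simple group. Next I restrict $\phi$ to the unipotent subgroup $U\cong(\Z/p\Z)^\alpha$ (where $q=p^\alpha$, writing $q=p^\alpha$ as in the statement). Decompose $\phi|_U$ into characters $\psi_1,\dots,\psi_d$ of $U$. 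If all the $\psi_i$ were trivial, then $U\subseteq\ker\phi$, contradicting faithfulness (for $q>3$, $U\not\subseteq Z(G)$); so some nontrivial character $\psi$ of $U$ occurs. Now I use that the diagonal torus $T_{\mathrm{split}}$ normalizes $U$ and acts on the character group $\widehat U\cong\F_q$ by $t\cdot\psi = \psi(t^{-2}\,\cdot\,)$ or similar, so that the orbit of any nontrivial character of $U$ under conjugation by $T_{\mathrm{split}}$ has size $(q-1)/2$ (the index-two subgroup of squares, because $t$ and $-t$ act the same up to the $-I$ identification). All characters in one such orbit appear in $\phi|_U$ with equal multiplicity (conjugate subrepresentations have the same multiplicity), so $d\geq (q-1)/2$.

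**The hard part will be** making the orbit-size computation airtight — specifically verifying that the action of the split torus on $\widehat U\setminus\{\mathbf 1\}$ has all orbits of size exactly $(q-1)/2$, not smaller, and handling the passage between $\SL_2$ and $\PSL_2$ so the factor of $2$ comes out right rather than giving only $q-1$ or the weaker $(q-1)/4$. This is where one must be careful that the element acting is $\operatorname{diag}(t,t^{-1})$, so $\psi\mapsto\psi(t^{-2}x)$, and $t^{-2}$ ranges over all nonzero squares as $t$ ranges over $\F_q^\times$; since $-1$ may or may not be a square this needs a line of care, but the stabilizer of a nontrivial additive character under multiplication by squares is trivial, giving orbit size $|\F_q^\times|/\!\sim$. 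Once that is settled, the inequality $d \geq (q-1)/2$ is immediate. For $q$ prime one can shortcut the whole argument via the explicit character table or the slick argument in the cited \cite[Lemma 1.3.3]{MR3309986}, and I would mention that as the quick alternative; for general $q=p^\alpha$ the orbit argument above, which is really just Clifford theory applied to $U\triangleleft B$ for $B$ the Borel, is the cleanest self-contained path and matches the level of the notes.
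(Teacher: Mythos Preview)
The paper does not actually prove this proposition: it attributes it to Frobenius, points to the character table in \cite{MR1691549} for general $q$, and cites \cite[Lemma 1.3.3]{MR3309986} for the prime case. So there is no in-text proof to compare against.

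Your approach is correct and is essentially the standard argument via Clifford theory for $U\triangleleft B$. The computation goes through cleanly: conjugation by $\operatorname{diag}(t,t^{-1})$ sends $u(b)=\left(\begin{smallmatrix}1&b\\0&1\end{smallmatrix}\right)$ to $u(t^2 b)$, so the induced action on $\widehat{U}\setminus\{\mathbf{1}\}$ is multiplication by nonzero squares; the stabilizer of any nontrivial additive character under this action is trivial, giving orbits of size $(q-1)/2$ for $q$ odd (and $q-1$ for $q$ even, which is even better). Since conjugation by $t\in T$ permutes the $U$-isotypic components of $\phi|_U$, any nontrivial character that occurs drags its whole orbit along with equal multiplicity, yielding $d\geq (q-1)/2$. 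One small clarification: the factor of $2$ comes directly from the kernel of $t\mapsto t^2$ on $\F_q^\times$, not from any passage to $\PSL_2$; your aside about ``the $-I$ identification'' is slightly off-target but harmless. The reduction to a representation faithful modulo the center is fine as stated (using simplicity of $\PSL_2(\F_q)$ for $q>3$; the cases $q\leq 3$ are trivial to check by hand).
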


   Ahora bien, para cada valor propio $\nu$ de $\mathscr{A}$, podemos
   considerar su {\em espacio propio} -- el espacio vectorial que consiste
   en todas las funciones propias $f:G\to \mathbb{C}$ con valor propio
   $\nu$. Como puede verse de la definici\'on de $\mathscr{A}$
   (inmediatamente despu\'es de (\ref{eq:dudur})), tal espacio es invariante
   bajo la acci\'on de $G$ por multiplicaci\'on por la derecha. En otras
   palabras, es una representaci\'on de $G$ - y puede ser trivial s\'olo
   si se trata del espacio (uni-dimensional) que consiste de las funciones 
   constantes, i.e., el espacio propio que corresponde al valor propio
   $\nu_0=1$.
   Por lo tanto, todo los otros valores propios tienen multiplicidad
   $\geq (q-1)/2$.  Asumamos, como es nuestra
   costumbre, que $A = A^{-1}$, lo cual implica que todos los valores
   propios son reales:
   \[ \dotsc \leq \nu_2 \leq \nu_1 \leq \nu_0 = 1.\]

   La idea es ahora es obtener un hueco espectral, i.e.,
   una cota superior para $\nu_j$, $j>0$.
   Es muy com\'un usar el hecho
   que la traza de una potencia $\mathscr{A}^r$ de
   una matriz de adyacencia puede expresarse de dos maneras:
   como el n\'umero (normalizado por el factor $1/|A|^r$, en nuestro caso)
   de ciclos de longitud $r$ en el grafo $\Gamma(G,A)$, por una parte,
   y como la suma de potencias $r$-\'esimas de los valores propios de
   $\mathscr{A}$, por otra. En nuestro caso, para $r=2$, esto nos da
   \begin{equation}\label{eq:gotra}
     \frac{|G| |A|}{|A|^2} = \sum_j \nu_j^2 \geq \frac{q-1}{2}
   \nu_j^2,\end{equation}
   para cualquier $j\geq 1$, y, por lo tanto,
   \begin{equation}\label{eq:himult}
|\nu_j|\leq \sqrt{\frac{|G|/|A|}{(q-1)/2}}.\end{equation}
   \'Esta es una cota superior muy peque\~na
   para $|A|$ grande. Esto
   quiere decir que unas cuantas aplicaciones de $\mathscr{A}$ bastan
   para hacer que una funci\'on se ``uniformice'', i.e., se vuelva
   casi constante, pues cualquier componente
   ortogonal al espacio propio de funciones constantes es multiplicado por
   alg\'un
   $\nu_j$, $j\geq 1$, en cada paso. La prueba siguiente simplemente aplica
   esta observaci\'on.
   
   \begin{prop}[\cite{MR2800484}]\label{prop:diplo}
Sea $G = \SL_2(\mathbb{F}_q)$, $q=p^\alpha$. Sea $A\subset G$,
$A = A^{-1}$. Asumamos $|A|\geq 2 |G|^{8/9}$. Entonces
\[A^3 = G.\]
   \end{prop}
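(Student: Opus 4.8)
El plan es usar las cotas espectrales (\ref{eq:himult}) para mostrar que si $g\in G$ no puede escribirse como producto de tres elementos de $A$, entonces las funciones indicadoras $\mathbf{1}_A$ y $\mathbf{1}_{g A^{-1}}$ tienen un producto interno que forzamos a ser positivo por un lado y casi cero por el otro, obteniendo una contradicci\'on. Concretamente, $g\in A^3$ si y s\'olo si existen $a_1,a_2,a_3\in A$ con $g = a_1 a_2 a_3$, lo cual equivale a que $A\cap (g A^{-1} A^{-1})\ne\emptyset$, es decir, a que $A^{-1} g$ intersecte $A\cdot A$; equivalentemente, contando, basta mostrar que el n\'umero $N(g)$ de representaciones $g = a_1 a_2 a_3$ es estrictamente positivo para todo $g\in G$.

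**Los pasos.**
Primero, expresar $N(g)$ como un producto de convoluciones: $N(g) = (\mathbf{1}_A * \mathbf{1}_A * \mathbf{1}_A)(g)$. Segundo, descomponer $\mathbf{1}_A = \frac{|A|}{|G|}\mathbf{1}_G + f$, donde $f$ es la parte ortogonal a las constantes; n\'otese que $\|f\|_2^2 \le \|\mathbf{1}_A\|_2^2 = |A|$. Tercero, el t\'ermino principal de $N(g)$ es $(|A|/|G|)^3 |G|^2 \cdot \tfrac1{|G|}$; esto hay que calcularlo con cuidado normalizando la convoluci\'on, pero en esencia el t\'ermino principal es del orden de $|A|^3/|G|$. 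Cuarto, los t\'erminos de error involucran convoluciones con al menos un factor $f$; aqu\'i entra la clave: la convoluci\'on por $\mathbf{1}_A$ act\'ua como (un m\'ultiplo de) $\mathscr{A}$ sobre el complemento ortogonal de las constantes, as\'i que $\|\mathbf{1}_A * f\|_2 \le |A|\cdot \max_{j\ge 1}|\nu_j| \cdot \|f\|_2 \le |A|\sqrt{\tfrac{|G|/|A|}{(q-1)/2}}\,\|f\|_2$. Iterando dos veces y usando Cauchy--Schwarz para pasar de la cota $L^2$ a una cota puntual $L^\infty$ (lo cual cuesta un factor $\sqrt{|G|}$ o se evita escribiendo $N(g)$ como $\langle \mathbf{1}_A * \mathbf{1}_A, \mathbf{1}_{g A^{-1}}\rangle$ y acotando cada factor en $L^2$), se obtiene que el error es $\ll |A|^{2}\cdot \tfrac{|G|/|A|}{(q-1)/2}\cdot |A|^{1/2}\cdot |G|^{1/2}$ o una expresi\'on an\'aloga.

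**El obst\'aculo principal y el cierre.**
La parte delicada es el bookkeeping de las normalizaciones y verificar que, con la hip\'otesis $|A|\ge 2|G|^{8/9}$, el t\'ermino principal domina estrictamente al error. Usando $|G| = |\SL_2(\F_q)| = q(q^2-1) \asymp q^3$, se tiene $q-1 \asymp |G|^{1/3}$, de modo que $\max_{j\ge1}|\nu_j|^2 \le \tfrac{|G|/|A|}{(q-1)/2} \ll \tfrac{|G|^{2/3}}{|A|}$. El cociente error/(t\'ermino principal) resulta ser, salvo constantes, una potencia de $|G|/|A| \le |G|^{1/9}$ contra $|G|$ elevado a un exponente que sale de las dos aplicaciones del operador y de la p\'erdida de Cauchy--Schwarz; el umbral $8/9$ est\'a precisamente calibrado para que ese cociente sea $<1$. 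Una vez establecido que $N(g)>0$ para todo $g\in G$, concluimos $A^3 = G$. El truco para no perder el factor $\sqrt{|G|}$ de la desigualdad de Cauchy--Schwarz --- y as\'i llegar al exponente \'optimo $8/9$ en lugar de algo peor --- es escribir $N(g) = \langle \mathbf{1}_A * \mathbf{1}_A,\ \mathbf{1}_{gA^{-1}}\rangle_{L^2(G)}$, aplicar $\|\mathbf{1}_A * \mathbf{1}_A - (\text{principal})\|_2 \le |A|\max_{j\ge1}|\nu_j|\cdot\|\mathbf{1}_A\|_2$ una sola vez, y usar $\|\mathbf{1}_{gA^{-1}}\|_2 = |A|^{1/2}$; esto es lo que espero que sea el punto t\'ecnico m\'as sensible de la prueba.
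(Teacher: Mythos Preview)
Your proposal is correct and, once you settle on the pairing formulation $N(g)=\langle \mathbf{1}_A\ast \mathbf{1}_A,\mathbf{1}_{gA^{-1}}\rangle$, it is essentially identical to the paper's proof: the paper writes $\langle \mathscr{A}\,\mathbf{1}_A,\mathbf{1}_{gA^{-1}}\rangle$ (which is $N(g)/|A|$), expands in the eigenbasis of $\mathscr{A}$, identifies the main term $|A|^2/|G|$, and bounds the remainder by $\max_{j\ge 1}|\nu_j|\cdot|\mathbf{1}_A|_2\cdot|\mathbf{1}_{gA^{-1}}|_2$ via Cauchy--Schwarz and (\ref{eq:himult}), arriving at the same inequality $|A|^3>2|G|^3/(q-1)$ that your computation yields. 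The earlier detours in your write-up (triple convolution, ``iterating twice'') are unnecessary; go straight to the pairing form with a single application of the eigenvalue bound.
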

   La suposicion $A = A^{-1}$ es en verdad innecesaria, gracias al trabajo
   adicional puesto en \cite{MR2410393} para el caso no sim\'etrico.
\begin{proof}
Supongamos $g\in G$ such that $g\notin A^3$. Entonces el producto escalar
\[\langle \mathscr{A} 1_A, 1_{g A}\rangle =
\sum_{x\in G} (\mathscr{A} 1_A)(x) \cdot 1_{g A^{-1}}(x)\]
es igual a $0$.
Podemos asumir que los vectores propios $v_j$ satisfacen $\langle v_j,v_j\rangle = 1$. Entonces
\[\begin{aligned}
\langle \mathscr{A} 1_A, 1_{g A}\rangle &=
\langle \sum_{j\geq 0} \nu_j \langle 1_A, v_j\rangle v_j, 1_{g A}\rangle
\\ &= \nu_0 \langle 1_A,v_0\rangle \langle v_0, 1_{g A^{-1}}\rangle +
\sum_{j>0} \nu_j \langle 1_A, v_j\rangle \langle v_j, 1_{g A^{-1}}\rangle
.\end{aligned}\] Ahora bien, $v_0$ es la funci\'on constante, y, al
satisfacer  $\langle v_0,v_0\rangle=1$, es igual
a $1/\sqrt{|G|}$. Luego
\[\nu_0 \langle 1_A,v_0\rangle \langle v_0, 1_{g A^{-1}}\rangle =
1\cdot \frac{|A|}{\sqrt{|G|}}\cdot \frac{|g^{-1} A|}{\sqrt{|G|}} = 
\frac{|A|^2}{|G|}.\]
Al mismo tiempo, gracias a (\ref{eq:himult}) y
Cauchy-Schwarz,
\[\begin{aligned} \left|\sum_{j>0} \nu_j \langle 1_A, v_j\rangle \langle v_j, 1_{g A^{-1}}\rangle\right|
&\leq
\sqrt{\frac{2 |G|/|A|}{q-1}}
\sqrt{\sum_{j\geq 1}  |\langle 1_A, v_j\rangle|^2}
\sqrt{\sum_{j\geq 1} |\langle v_j, 1_{g A^{-1}}\rangle|^2}\\
&\leq 
\sqrt{\frac{2 |G|/|A|}{q-1}} |1_A|_2 |1_{g A^{-1}}|_2
=  \sqrt{\frac{2|G| |A|}{q-1}} .
\end{aligned}\]
Como $|G|= (q^2-q)q$, tenemos que $|A|\geq 2 |G|^{8/9}$ implica que
\[\frac{|A|^2}{|G|} > \sqrt{\frac{2 |G| |A|}{q-1}},\] y por lo tanto
$\langle \mathscr{A} 1_A, 1_{g A^{-1}}\rangle$ es mayor que $0$. Contradicci\'on.
\end{proof}

  \section{El crecimiento en $\SL_2(K)$, $K$ arbitrario}
  Probemos finalmente el teorema \ref{thm:main08}. En esta parte nos acercaremos
  m\'as a tratamientos nuevos (en particular, \cite{PS})
  que al tratamiento original en \cite{Hel08}; estos tratamientos nuevos
  se generalizan m\'as f\'acilmente. Si bien s\'olo deseamos presentar
  una prueba para $\SL_2$, notaremos el punto o dos en la prueba
  d\'onde hay que trabajar un poco a la hora de generalizarla para $\SL_n$.

  La primera prueba de este teorema en la literatura utilizaba el
 {\em teorema de la suma y producto}, un resultado no trivial de combinatoria
 aditiva. La prueba que daremos no lo utiliza, pero s\'i tiene algo en com\'un
 con su prueba: la inducci\'on, usada de una manera particular. En esencia,
 si algo es cierto para el paso $n$, pero no para el paso $n+1$, se trata
 de usar ese mismo hecho para obtener la conclusi\'on que deseamos
 de otra manera (lo que se llama un ``fulcro'' ({\em pivot})
 en la prueba que estamos por
 ver).
 El hecho que estemos en un grupo sin un orden natural ($n$, $n+1$, etc.)
 resulta ser irrelevante.

 \begin{proof}[Prueba del Teorema \ref{thm:main08}]
   Gracias a (\ref{eq:mony}), podemos asumir que $A=A^{-1}$ y $e\in A$.
   Tambi\'en podemos asumir que $|A|$ es mayor que una constante absoluta,
   pues de lo contrario la conclusi\'on es trivial. Escribamos $G=\SL_2$.

   Supongamos que $|A^3|<|A|^{1+\delta}$, donde $\delta>0$ es una peque\~na
   constante a ser determinada m\'as tarde. Por escape (Prop.~\ref{prop:huru}),
   existe un elemento $g_0\in A^c$ regular semisimple
   (esto es, $\tr(g_0)\ne \pm 2$), donde $c$ es una constante absoluta.
   (A decir verdad, $c=2$; ejercicio opcional.)
   Su centralizador en $G(K)$
   es $C(g) = T(\overline{K})\cap G(K)$ para alg\'un toro
   maximal $T$.

   Llamemos a $\xi\in G(K)$ un {\em fulcro} si la funci\'on $\phi_g:A\times C(g)
   \to G(K)$ definida por
   \begin{equation}\label{eq:naksym}
     (a,t) \mapsto a \xi t \xi^{-1}
   \end{equation}
   es inyectiva en tanto que funci\'on de $\pm e\cdot
   A/\{\pm e\} \times C(g)/\{\pm e\}$
   a $G(K)/\{\pm e\}$.

   {\em   Caso (a): Hay un fulcro $\xi$ en $A$.} Por el Corolario
   \ref{cor:hutz}, existen $\gg |A|^{1/3 - O (c\delta)}$ elementos de $C(g)$
   en $A^2$. Por lo tanto, por la inyectividad de $\phi_\xi$,
   \[\left|\phi_\xi(A,A^2\cap C(g))\right| \geq \frac{1}{4} |A| |A^2\cap C(g)|
   \gg |A|^{\frac{4}{3} - O(c \delta)}.\]
   Al mismo tiempo, $\phi_\xi(A,A^2\cap C(g))\subset A^5$, y por lo tanto
   \[|A^5|\gg |A|^{4/3 - O(c \delta)}.\]
   Para $|A|$ mayor que una constante y $\delta>0$ menor que una constante,
   esto nos da una contradicci\'on con $|A^3| < |A|^{1+\delta}$ (por Ruzsa
   (\ref{eq:jotor})).

   {\em Caso (b): No hay fulcros $\xi$ en $G(K)$.} Entonces, para todo
   $\xi\in G(K)$, hay $a_1,a_2\in A$, $t_1,t_2\in T(K)$,
   $(a_1,t_1) \ne (\pm a_2, \pm t_2)$ tales que $a_1 \xi t_1 \xi^{-1} =
   \pm e \cdot a_2 \xi t_2 \xi^{-1}$, lo cual da
   \[a_2^{-1} a_1 = \pm e\cdot \xi t_2 t_1^{-1} \xi^{-1}.\]
   En otras palabras, para cada $\xi\in G(K)$, $A^{-1} A$ tiene una
   intersecci\'on no trivial con el toro $\xi T \xi^{-1}$:
   \begin{equation}\label{eq:nortsch}
     A^{-1} A \cap \xi T(K) \xi^{-1} \ne \{\pm e\}.
   \end{equation}
   (Por cierto, esto s\'olo es posible si $K$ es un cuerpo finito
   $\mathbb{F}_q$. Por qu\'e?)
   
   Escoja cualquier $g\in A^{-1} A \cap \xi T(K) \xi^{-1}$ con $g\ne \pm e$.
   Entonces $g$ es regular semisimple (nota: esto es peculiar a $\SL_2$)
   y su centralizador $C(g)$ es igual a $\xi T(K) \xi^{-1}$ (por qu\'e?).
   Por lo tanto, por el corolario \ref{cor:hutz}, obtenemos que hay
   $\geq c' |A|^{1/3 - O(\delta)}$ elementos de $\xi T(K) \xi^{-1}$ en $A^2$,
   donde $c'$ y la constante impl\'icita son absolutas.

   Por lo menos $(1/2) |G(K)|/|T(K)|$ toros m\'aximos de $G$ son de la
   forma $\xi T \xi^{-1}$, $\xi \in G(K)$ (demostrar!). Tambi\'en tenemos
   que todo elemento de $G$ que no sea $\pm e$ puede estar en a lo m\'as
   un toro m\'aximo (de nuevo algo peculiar a $\SL_2$). Por lo tanto,
   \[|A^2|\geq \frac{1}{2} \frac{|G(K)|}{|T(K)|} (c' |A|^{1/3 - O(\delta)} - 2)
   \gg q^2 |A|^{1/3 - O(\delta)}.\]

   Por lo tanto, ya sea $|A^2|> |A|^{1+\delta}$ (en contradicci\'on con
   la suposici\'on que $|A^3|\leq |A|^{1+\delta}$)  o
   $|A|\geq |G|^{1-O(\delta)}$. En el segundo caso,
la proposici\'on \ref{prop:diplo} implica que $A^3 = G$.   

{\em Caso (c): Hay elementos de $G(K)$ que son fulcros y otros que no lo son.}
Como $\langle A\rangle = G(K)$, esto implica que existe un $\xi\in G$
que no es un fulcro y un $a\in A$ tal que $a\xi \in G$ s\'i es un fulcro.
Como $\xi$ no es un fulcro, (\ref{eq:nortsch}) es cierto, y por lo tanto
hay $|A|^{1/3-O(\delta)}$ elementos de $\xi T \xi^{-1}$ en $A^k$.

  Al mismo tiempo, $a\xi$ es un fulcro, i.e., la aplicaci\'on
  $\phi_{a\xi}$ definida en (\ref{eq:naksym}) es inyectiva
  (considerada como una aplicaci\'on de
$A/\{\pm e\} \times C(g)/\{\pm e\}$
   a $G(K)/\{\pm e\}$).
  Por lo tanto,
  \[\left|\phi_{a \xi}(A, \xi^{-1} (A^k \cap \xi T \xi^{-1}) \xi)\right| \geq \frac{1}{4}
|A| |A^k \cap \xi T \xi^{-1}| \geq \frac{1}{4} |A|^{\frac{4}{3} - O(\delta)}.\]
Como $\phi_{a \xi}(A, \xi^{-1} (A^k \cap \xi T \xi^{-1}) \xi) \subset A^{k+3}$,
obtenemos que \begin{equation}\label{eq:matameri}
|A^{k+3}|\geq \frac{1}{4} |A|^{4/3 - O(\delta)}.\end{equation}
Gracias otra vez a Ruzsa (\ref{eq:jotor}), esto contradice
$|A^3|\leq |A|^{1+\delta}$ para $\delta$ suficientemente peque\~no.
 \end{proof}



 \appendix
 
 \chapter{Expansi\'on en $\SL_2(\mathbb{Z}/p\mathbb{Z})$}\label{chap:appa}

 Daremos aqu\'i un esbozo de c\'omo Bourgain y Gamburd probaron
 que, para $A_0\subset \SL_2(\mathbb{Z})$ tal que
 $\langle A_0\rangle$ es Zariski-denso, entonces
   \[\{\Gamma(\SL_2(\mathbb{Z}/p\mathbb{Z}),A_0 \mo p)\}_{\text{$p>C$, $p$ primo}}\]
   es una familia de expansores, i.e., tiene un hueco espectral constante.

   Primero, clarifiquemos que quiere decir ``Zariski-denso''. Esto quiere
   decir simplemente que no existe ninguna subvariedad $V\subsetneq \SL_2(\mathbb{C})$ que contenga a $\langle A_0\rangle$. Como dijimos en (\ref{sec:res}),
   es un hecho conocido que esto implica que $A_0 \mod p$ genera
   $\SL_2(\mathbb{Z}/p\mathbb{Z})$ para $p$ mayor que una constante $C$
   (\cite{MR763908}, \cite{MR735226}, \cite{MR880952} y
\cite{MR1329903} lo prueban para $\SL_2$ y para muchos grupos m\'as).

Es sencillo pasar a un subgrupo libre:
\[\Gamma(2) = \{g\in \SL_2(\mathbb{Z}): g\equiv I \mod 2\}\]
es libre, y es un resultado est\'andar (Nielsen-Schreier) que todo subgrupo
de un grupo libre es libre.
Por lo tanto $\langle A_0\rangle\cap \Gamma(2)$ es libre. El \'indice
de $\langle A_0\rangle\cap \Gamma(2)$
en $\langle A_0\rangle$ es
finito (por qu\'e?), y podemos encontrar un conjunto finito
que genera $\langle A_0\rangle\cap \Gamma(2)$ (generadores de Schreier, por
ejemplo). Esto es suficiente para que podamos asumir, sin p\'erdida de
generalidad, que $\langle A_0\rangle$ es libre.

Lo que ahora haremos es considerar la funci\'on
\[\mu(x) = \begin{cases} \frac{1}{|A_0 \mo p|} &\text{si $x\in A_0 \mo p$,}\\
0 &\text{si $x\not\in A_0 \mo p$}\end{cases}\]
y sus convoluciones. 
La convoluci\'on $f\cdot g$ de dos funciones
$f,g:G\to \mathbb{C}$ se define por
\[(f\cdot g)(x) = \sum_{y\in G} f(x y^{-1}) g(y).\]
La norma $\ell_p$ de una funci\'on $f:G\to \mathbb{C}$ es
\[|f|_p = \left(\sum_{y\in G} |f(y)|^p\right)^{1/p}.\]
Es f\'acil ver que la convoluci\'on
$\mu^{(\ell)} := \mu \ast \mu \ast \dotsc \ast \mu$ ($\ell$ veces) tiene norma
$\ell_1$ igual a $1$. Empero, la norma $\ell_2$ var\'ia. Para todo $f$,
$|f\ast \mu|_2\leq |f|_2$, por Cauchy-Schwarz,
con igualdad s\'olo si $f$ es uniforme, es decir, constante (ejercicio).
Por lo tanto, $|\mu^{(\ell)}|_2$ decrece cuando $\ell$ aumenta.

Nos interesa saber que tan r\'apido decrece, pues \'esto nos da
informaci\'on sobre los valores propios de $\mathscr{A}$. Veamos
por qu\'e. El operador
$\mathscr{A}$ no es sino la convoluci\'on por $\mu$. Podemos comparar, como
en (\ref{eq:gotra}), dos expresiones para la traza. Por una parte, la
traza de $\mathscr{A}^{2 \ell}$ es igual a la suma, para todo $g$,
del n\'umero de maneras de ir de $g$ a $g$ tomando productos por $A$
exactamente $2\ell$ veces, dividido por $|A_0|^{2\ell}$; esto es
\[|G| \mu^{(2 \ell)}(e) = |G| \sum_{x\in G} \mu^{(\ell)}(x^{-1}) \mu^{(\ell)}(x) =
|G| |\mu^{(\ell)}|_2^2,\]
donde $G=\SL_2(\mathbb{Z}/p\mathbb{Z})$.
(Como de costumbre, asumimos que $A_0 = A_0^{-1}$.)
Por otra parte, la traza de $\mathscr{A}^{2\ell}$ es igual a
$\sum_i \nu_i^{2\ell}$, donde $1=\nu_0> \nu_1 \geq \dotsc$ son los
valores propios de $\mathscr{A}$.

Como ya discutimos en \S \ref{sec:subgra}, todo valor propio
$\nu_j$, $j\geq 1$, tiene multiplicidad $\geq (p-1)/2$. Por lo tanto,
tenemos que, para todo $j\geq 1$,
\[\frac{p-1}{2} \nu_j^{2\ell} \leq \sum_{j\geq 0} \nu_j^{2\ell}
= |G| |\mu^{(\ell)}|_2^2.\]
Nuestra meta ser\'a mostrar que, para alg\'un $\ell\leq C \log p$,
$C$ una constante suficientemente grande, la funci\'on $\mu^{(\ell)}$ es razonablemente
uniforme, o ``llana'', por lo menos del punto de vista de su norma $\ell_2$:
$|\mu^{(\ell)}|_2^2 \ll 1/|G|^{1-\epsilon}$. (La distribuci\'on uniforme
tiene norma $\ell_2$ igual a $1/|G|$, naturalmente.) Entonces tendremos
\[\nu_j^{2\ell} \ll \frac{|G|^\epsilon}{p} \ll \frac{1}{p^{1-3\epsilon}}\]
(puesto que $|G|\ll p^3$) y por lo tanto
\[\nu_j \leq e^{- \frac{(1 - 3 \epsilon) \log p}{C \log p}}\leq 1 - \delta,\] 
donde $\delta>0$ es una constante (que, como $C$, puede depender de $A_0$).
Esto es lo que deseamos. 

(El uso de la multiplicidad de $\nu_j$
en este contexto particular remonta a Sarnak-Xue \cite{SarnakXue}.)

Lo que queda es, como dec\'iamos, mostrar que $|\mu^{(\ell)}|$ decrece
r\'apidamente cuando $\ell$ aumenta. \'Esto esta estrechamente ligado
a mostrar que $|A_0^\ell|$ decrece (en particular, lo implica), pero
no es trivialmente equivalente.

La prueba tiene dos pasos. Primero, igual que para $|A_0^\ell|$ (ver el
ejercicio \ref{ej:luk} y los comentarios que lo siguen), est\'a el caso de
lo que pasa para $\ell \leq \epsilon' \log p$, donde $\epsilon'$ es lo
suficientemente peque\~no como para que, para elementos
$g_1,g_2,\dotsc,g_{2\ell} \in A_0 = A_0 \cup A_0^{-1}$ cualesquiera, tengamos
que ninguno de los coeficientes de la matriz $g_1 g_2 \dotsc g_{2\ell}
\in \SL_2(\mathbb{Z})$ tenga valor absoluto $\geq p-1$. Entonces,
tenemos que no existen $x_i\in A_0 \mo p$, $1\leq i\leq k$,
$x_{i+1}\notin \{x_i,x_i^{-1}\}$ para
$1\leq i\leq k-1$, $x_i\ne e$ para $1\leq i\leq k$,
y $r_i\in \mathbb{Z}$, $r_i\ne 0$, $\sum_{1\leq i\leq k}
|r_i|\leq 2 \ell$,
tales que
\[x_1^{r_1} \dotsb x_k^{r_k} = e.\]
(Idea: si un elemento de $\SL_2(\mathbb{Z})$
es congruente $\mo p$ a la identidad sin ser la
identidad: entonces por lo menos uno de sus coeficientes de matriz tiene
valor absoluto por lo menos $p-1$.)

Esto implica inmediatamente que
los productos
de elementos de $A_0 \mo p$ de longitud $\ell$ son todos diferentes
(excepto por las igualdades obvias del tipo
$x\cdot e = x$ y $x\cdot x^{-1} = e$).
Por lo tanto, 
$|(A\mo p)^\ell|$ crece exponencialmente:
$|(A_0\mo p)^\ell| \geq (|A_0|-2)^\ell$.
\'Esta era
la parte crucial a soluci\'on del ejercicio \ref{ej:luk}.
Mostrar que $|\mu^\ell|_2$
decrece tambi\'en exponencialmente no es mucho m\'as dif\'icil, sobre
todo porque podemos asumir que $A_0$ es m\'as grande que una constante.
(Para $A_0$ m\'as peque\~no que una constante, ser\'ia un asunto m\'as
delicado: se trata
de un resultado cl\'asico de Kesten \cite{MR0109367} sobre los grupos libres.)

Queda por ver como decrece $|\mu^\ell|_2$ para $\epsilon' \log p \leq \ell 
\leq C \log p$. Aqu\'i que Bourgain y Gamburd muestran que, si tuvieramos
\[|\mu^{2\ell}|_2 > |\mu^{\ell}|_2^{1+\delta'},\]
$\delta'>0$, entonces existe un conjunto $A'\subset \SL_2(\mathbb{Z}/p
\mathbb{Z})$
tal que $|A'^3|<|A'|^{1+O(\delta')}$.
(La herramienta principal es 
el teorema de Balog-Szemer\'edi \cite{MR1305895}, fortalecido por Gowers
\cite{MR1844079} y generalizado
por Tao \cite{MR2501249} al caso no conmutativo.)
Muestran tambi\'en que $\mu(A')$ es grande, por lo cual $A'$ es menor
que $|G|^{1-O(\delta')}$ a menos que $\mu$ ya sea tan uniforme como deseamos.
Un argumento auxiliar muestra que $A'$ genera $\SL_2(\mathbb{Z}/p\mathbb{Z})$.
Por lo tanto, $|A'^3|<|A'|^{1+O(\delta')}$ entra
en contradicci\'on con el Teorema \ref{thm:main08}.

Esto muestra que $|\mu^{2\ell}|_2 \leq |\mu^{\ell}|_2^{1+\delta'}$
para $\epsilon' \log p \leq \ell \leq C \log p$, y termina la prueba.
Concluimos que $\nu_1\leq 1 - \delta$, que era lo que quer\'iamos demostrar.

\bibliographystyle{alpha}
\bibliography{cusco}
\end{document}